\renewcommand{\MR}[1]{\href{http://www.ams.org/mathscinet-getitem?mr=#1}{MR#1}}
\newcommand{\Zbl}[1]{\href{http://zbmath.org/?q=an:#1}{Zbl~#1}}
\title[Horospherical limit points of locally symmetric spaces]
{Horospherical limit points of \\ finite-volume locally symmetric spaces}
\author{Grigori Avramidi}
\address{Department of Mathematics, University of Chicago, Chicago, IL 60637}
\email{gavramid@math.uchicago.edu, http://math.uchicago.edu/$\sim$gavramid/}
\author{Dave Witte Morris}
\address{Department of Mathematics and Computer Science, University of Lethbridge, Lethbridge, Alberta, T1K\,6R4, Canada}
\email{Dave.Morris@uleth.ca, http://people.uleth.ca/$\sim$dave.morris/}
\date{\today}
\DeclareMathOperator{\SL}{SL}
\DeclareMathOperator{\unip}{unip}
\DeclareMathOperator{\tr}{tr}
\DeclareMathOperator{\ad}{ad}
\newcommand{\real}{\mathbb{R}}
\newcommand{\rational}{\mathbb{Q}}
\newcommand{\integer}{\mathbb{Z}}
\newcommand{\closure}{\overline}
\newcommand{\GG}{\mathbf{G}}
\newcommand{\PP}{\mathbf{P}}
\renewcommand{\SS}{\mathbf{S}} 
\newcommand{\rank}[1]{\mathop{\text{rank}}_{#1}}
\newcommand{\bdry}{\partial}
\numberwithin{equation}{section}
\theoremstyle{plain}
\newtheorem{thm}[equation]{Theorem}
\newtheorem{prop}[equation]{Proposition}
\newtheorem{cor}[equation]{Corollary}
\newtheorem{lem}[equation]{Lemma}
\crefname{cor}{Corollary}{Corollaries}
\Crefname{cor}{Corollary}{Corollaries}
\crefname{lem}{Lemma}{Lemmas}
\Crefname{lem}{Lemma}{Lemmas}
\theoremstyle{definition}
\newtheorem{notation}[equation]{Notation}
\newtheorem{defn}[equation]{Definition}
\newtheorem*{ack}{Acknowledgments}
\theoremstyle{definition}
\newtheorem{rem}[equation]{Remark}
\newtheorem{rems}[equation]{Remarks}
 \newcounter{case}
 \renewcommand{\thecase}{\arabic{case}}
 \renewcommand{\Cref}{\cref}
 \newcommand{\pref}[1]{(\ref{#1})}
 \newcommand{\fullcref}[2]{\cref{#1}\pref{#1-#2}}
 \newcommand{\fullCref}[2]{\Cref{#1}\pref{#1-#2}}
\newcommand{\noprelistbreak}{\smallskip\@nobreaktrue\nopagebreak} 
\def\cref@thmoptarg[#1]#2#3#4{%
  \ifhmode\unskip\unskip\par\fi%
  \normalfont%
  \trivlist%
  \let\thmheadnl\relax%
  \let\thm@swap\@gobble%
  \thm@notefont{\fontseries\mddefault\upshape}%
  \thm@headpunct{.}
  \thm@headsep 5\p@ plus\p@ minus\p@\relax%
  \thm@space@setup%
  #2
  \@topsep \thm@preskip               
  \@topsepadd \thm@postskip           
  \def\@tempa{#3}\ifx\@empty\@tempa%
    \def\@tempa{\@oparg{\@begintheorem{#4}{}}[]}%
  \else%
    \refstepcounter[#1]{#3}
    \def\dth@counter{} 
    \def\@tempa{\@oparg{\@begintheorem{#4}{\csname the#3\endcsname}}[]}%
  \fi%
  \@tempa}
\begin{document}

\begin{abstract}
Suppose $X/\Gamma$ is an arithmetic locally symmetric space  of noncompact type (with the natural metric induced by the Killing form of the isometry group of~$X$), and let $\xi$ be a point on the visual boundary of~$X$. T.\,Hattori showed that if each horoball based at~$\xi$ intersects every $\Gamma$-orbit in~$X$, then $\xi$ is not on the boundary of any $\rational$-split flat in~$X$. We prove the converse. (This was conjectured by W.\,H.\,Rehn in some special cases.)
Furthermore, we prove an analogous result when $\Gamma$ is a nonarithmetic lattice.
\end{abstract}

\maketitle

\setcounter{tocdepth}{1} 
\tableofcontents

\section{Introduction}

\begin{defn}[{}{\cite[Defn.~B]{Hattori-GeomLimSets}}] \label{HoroLimDefn}
Let $X/\Gamma$ be a locally symmetric space of noncompact type (with universal cover~$X$), and let $x \in X$. A point~$\xi$ on the visual boundary of~$X$ is a \emph{horospherical limit point} for~$\Gamma$ if every horoball based at~$\xi$ intersects the orbit $x \cdot \Gamma$.
(See \cref{CGammaHoroLimDefn} for an alternate characterization which makes it clear that this notion is independent of the choice of the basepoint~$x$.)
\end{defn}

Our main theorem characterizes the horospherical limit points for any finite-volume locally symmetric space $X/\Gamma$ of noncompact type.  The result is slightly easier to state if we assume that the lattice~$\Gamma$ is arithmetic. (See \cref{NonarithSect} for the general case.) 

\begin{defn}
Let $G$ be the real points of a connected, semisimple algebraic group over~$\rational$, and let $X = K \backslash G$ be the corresponding symmetric space of noncompact type.
\noprelistbreak
	\begin{enumerate}
	\item It is well known that if $T$ is any $\real$-split torus in~$G$, then there exists $x \in X$, such that $x T$ is a flat in~$X$ (cf.\ \cite[Prop.~6.1, pp.~245]{Helgason-DiffGeom}). 
	We say the flat $x T$ is \emph{$\rational$-split} if the torus~$T$ is (defined over~$\rational$ and) $\rational$-split.
	\item The Killing form on~$G$ induces a metric on $K \backslash G$ that gives it the structure of a symmetric space \cite[Prop.~3.6]{Helgason-LieGrpSymSp}. 
	We call this the \emph{Killing-form metric}. See \cref{NotKilling} for a formulation of our results that applies to the other symmetric metrics on $K \backslash G$.
	\end{enumerate}
\end{defn}

The direction ($\Rightarrow$) of the following result has already been proved by T.\,Hattori \cite[Thm.~A or Prop.~4.4]{Hattori-GeomLimSets}, but we provide a proof of both directions because our methods are quite different.

\begin{thm} \label{HoroLimIffgRat}
Let $X / \Gamma$ be an arithmetic locally symmetric space of noncompact type with the Killing-form metric. A point $\xi \in \bdry X$ is a horospherical limit point for\/~$\Gamma$ if and only if $\xi$~is not on the boundary of any\/ $\rational$-split flat.
\end{thm}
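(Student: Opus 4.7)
My approach is to reduce the theorem to a recurrence statement about a geodesic ray in $X/\Gamma$. Represent $\xi$ as the endpoint of the ray $r(t) = x_0 \cdot \exp(tH)$ from the basepoint $x_0 = K \in X$, with $H \in \mathfrak{p} \subset \mathfrak{g}$ a unit vector; then (after possibly adjusting the basepoint) the hypothesis ``$\xi$ lies on the boundary of a $\rational$-split flat'' translates to ``$H$ lies in the Lie algebra of some $\rational$-split torus of $G$.'' Right-multiplication by $\exp(sH)$ fixes $\xi$, so the Busemann function satisfies $b_\xi(y \cdot \exp(sH)) = b_\xi(y) - s$ with $b_\xi(r(t)) = -t$. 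The bridge is this short observation: if the image $\bar r(t)$ is recurrent in $X/\Gamma$, i.e.\ if there exist $t_n \to \infty$ and $\gamma_n \in \Gamma$ with $r(t_n) \cdot \gamma_n^{-1}$ in a fixed compact set $K' \subset X$, then the right-$G$-invariance of the Killing-form metric gives $d(x_0 \cdot \gamma_n, r(t_n)) = d(x_0, r(t_n) \cdot \gamma_n^{-1}) \le \operatorname{diam}(K')$, so $b_\xi(x_0 \cdot \gamma_n) \le -t_n + \operatorname{diam}(K') \to -\infty$, placing the orbit in every horoball at $\xi$. Hence the theorem reduces to showing that $\bar r$ is recurrent in $X/\Gamma$ if and only if $H$ lies in no $\rational$-split Cartan.

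For the $(\Rightarrow)$ direction (essentially Hattori's argument): if $H$ lies in the Lie algebra of a $\rational$-split torus $A$, then $\exp(tH)$ travels into the cusp associated with the $\rational$-parabolic $P = Z_G(A)N$, so $\bar r(t)$ escapes every compact set of $X/\Gamma$. Moreover, $x_0 \cdot \Gamma$ is contained in a finite union of Siegel sets on which $b_\xi$ is bounded below, so deep horoballs at $\xi$ are disjoint from the orbit.

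For the $(\Leftarrow)$ direction, assume $H$ lies in no $\rational$-split Cartan; the aim is to show $\bar r$ is recurrent, arguing by contradiction. If $\bar r$ escapes every compact set, reduction theory $G = \bigcup_i \Gamma \cdot \omega_i A_i^+ K$ (a finite union of Siegel sets over $\Gamma$-conjugacy classes of maximal $\rational$-split tori $A_i$) yields, along a subsequence $t_n \to \infty$, a decomposition $\exp(t_n H) = \gamma_n \omega_n a_n k_n$ with $\omega_n$ bounded, $k_n \in K$, and $a_n \in A_i^+$ going to infinity in the dominant chamber, for some fixed index $i$. Comparing the flat through $r(t_n)$ with the $\rational$-split flat $x_0 \cdot \gamma_n \omega_n A_i$---both asymptotic to the same cusp direction---forces $\operatorname{Ad}(\gamma_n^{-1}) H$ to align asymptotically with $\operatorname{Ad}(\omega_n) \mathfrak{a}_i$. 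Since the set of $\Gamma$-conjugates of $\mathfrak{a}_i$ is locally finite in the space of Cartan subalgebras, a limit argument (using compactness of the $\omega_n$) then places $H$ in a $\Gamma$-conjugate of $\mathfrak{a}_i$, which is a $\rational$-split Cartan, contradicting the hypothesis.

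The main obstacle is this last step: promoting asymptotic alignment to exact containment of $H$ in some $\rational$-split Cartan. This is, in effect, a non-divergence theorem for one-parameter Cartan flows on $\Gamma\backslash G$, close in spirit to (though not a direct consequence of) the Tomanov--Weiss classification of divergent orbits of full $\real$-split tori. The Killing-form metric supplies the orthogonality required for the flat comparison to close cleanly, and the $K$-ambiguity in the Siegel-set decomposition must be handled carefully since $K$ is a nontrivial compact group.
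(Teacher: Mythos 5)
The crucial reduction at the end of your second paragraph --- ``the theorem reduces to showing that $\bar r$ is recurrent in $X/\Gamma$ if and only if $H$ lies in no $\rational$-split Cartan'' --- is false, and it sinks the entire $(\Leftarrow)$ argument. The implication you actually establish is one-sided: recurrence of the geodesic ray implies that every horoball at~$\xi$ meets the orbit $x_0\Gamma$. The converse is what the $(\Leftarrow)$ direction would need, and it does not hold. Dani (cited in the paper's Remark~\ref{GenRems}(2)) showed that when $\rank{\real}X\ge 2$ there are geodesic rays that diverge in $X/\Gamma$ --- i.e., $\bar r$ is \emph{not} recurrent --- even though their endpoints lie on no $\rational$-split flat. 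These endpoints are horospherical limit points (this is one of the consequences of \cref{HoroLimIffgRat}), but the orbit points witnessing that fact sit far out on the horospheres, nowhere near the ray $r(t)$. So the ``non-divergence theorem for one-parameter Cartan flows'' you flag as the main obstacle in your final paragraph is not merely hard to prove: it is not true, and no amount of work on the Siegel-set alignment argument can close the gap.

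The paper's actual $(\Leftarrow)$ argument sidesteps recurrence of $\{xa^t\}$ entirely. It works with the full horosphere $\mathcal{H}_t = xa^tA_\perp N$ (\fullcref{HoroSubgrp}{subgrp}) rather than the geodesic through it, and the key input is Dani's theorem on closures of maximal-unipotent orbits ($\closure{N\Gamma}=M^*U\Gamma$, \cref{ClosureIsHomog} and \cref{ClosureIsPorbit}). From a horosphere that is not coarsely dense one extracts a $\rational$-parabolic $P$ fixing $\xi$ with $P\cap\Gamma$ fixing the horospheres (\cref{HoroToInfty}); a cocompactness argument for the anisotropic part of a torus in $P$ then forces $\xi$ onto a $\rational$-split flat (\cref{OnQsplit}). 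The unipotent direction is doing the work that recurrence of the geodesic cannot. Your $(\Rightarrow)$ sketch, on the other hand, is roughly on the same track as the paper's \cref{OnFlatNotHoro}, though the paper makes the Siegel-set heuristic precise via the linear-algebra \cref{InSpanMustLarge} and the contraction of lattices in the $N_\alpha$'s rather than by a Busemann-function bound on a finite union of Siegel sets.
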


Since $\GG(\rational)$ acts transitively on the set of maximal $\rational$-split tori, we have the following reformulation:

\begin{cor} \label{RatBldg}
Let 
\noprelistbreak
	\begin{itemize}
	\item $G$ be the real points of a connected, semisimple algebraic group over\/~$\rational$, 
	\item $X = K \backslash G$ be the corresponding symmetric space of noncompact type with the Killing-form metric, 
	and 
	\item $B$ be the boundary of some maximal\/ $\rational$-split flat in~$X$. 
	\end{itemize}
Then the set of horospherical limit points for\/~$G_{\integer}$ is the complement of\/ $\bigcup_{g \in G_{\rational}} Bg$.
\end{cor}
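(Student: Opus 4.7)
My plan is to deduce this corollary as an essentially formal reformulation of \cref{HoroLimIffgRat}. That theorem says a point $\xi \in \bdry X$ is a horospherical limit point for $G_{\integer}$ if and only if $\xi$ does not lie on the boundary of any $\rational$-split flat, so it suffices to establish the set-theoretic equality
\[
\bigcup_{g \in G_{\rational}} B g \;=\; \bigcup \bigl\{\, \bdry F : F \text{ is a } \rational\text{-split flat in } X \,\bigr\}.
\]

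For the inclusion $(\supseteq)$, I would first reduce to maximal flats: since every $\rational$-split torus is contained in a maximal $\rational$-split torus, every $\rational$-split flat $F = xT$ sits inside (or is parallel to a sub-flat of) a maximal $\rational$-split flat $F^* = xT^*$, whence $\bdry F \subseteq \bdry F^*$. Thus the right-hand union coincides with the union over \emph{maximal} $\rational$-split flats. Next, by the fact recalled just before the corollary, $\GG(\rational)$ acts transitively on maximal $\rational$-split tori by conjugation, so if we write $B = \bdry F_0$ with $F_0 = x_0 T_0$, then for every maximal $\rational$-split torus $T'$ we may choose $g \in G_{\rational}$ with $g^{-1} T_0 g = T'$. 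Because $G$ acts on $X$ by isometries (with the Killing-form metric), $F_0 g = (x_0 g)(g^{-1} T_0 g) = (x_0 g) T'$ is a maximal $\rational$-split flat and $B g = \bdry(F_0 g)$. Finally, the boundary of a flat in the CAT(0) space $X$ depends only on its parallel class, and any two flats of the form $x T'$ (with $T'$ fixed and $x$ ranging over valid basepoints) are parallel; hence $\bdry(F_0 g) = \bdry(x' T')$ for any realization $x' T'$ of a maximal $\rational$-split flat with torus~$T'$. This gives $(\supseteq)$.

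The inclusion $(\subseteq)$ is essentially immediate from the same computation: $F_0 g$ is itself a maximal $\rational$-split flat for every $g \in G_{\rational}$, so its boundary $B g$ is one of the boundaries appearing on the right. I do not anticipate a real obstacle here; the only points requiring care are the standard facts that $G$ acts by isometries in the Killing-form metric (so right translation sends flats to flats and visual boundaries to visual boundaries), and that parallel flats share a common visual boundary, both of which let us pass freely between the abstract torus~$T'$ and a concrete flat realizing it.
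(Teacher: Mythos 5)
Your argument is correct and follows exactly the route the paper intends: the paper states no proof beyond the remark that $\GG(\rational)$ acts transitively on maximal $\rational$-split tori, and your proposal simply spells out that remark, reducing to maximal flats and using that right-translation by $g \in G_{\rational}$ is an isometry commuting with the visual boundary map. The one place you should be slightly careful is the appeal to parallelism: for a \emph{maximal} $\real$-split torus the flat $xA$ is in fact unique (as used implicitly in the paper's \cref{AFixesXi}), so the parallel-class argument is only genuinely needed in your reduction step where $T$ is not maximal and $x'T$ may differ from $xT$; there the observation that $d(xt, x't) = d(x, x')$ for all $t \in T$ (right translation is isometric) shows the two flats are at bounded Hausdorff distance, so your claim holds.
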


For the special case of $\rational$-split groups, we can state this another way:

\begin{cor} \label{QsplitHoroLimits}
Let $\GG$ be a connected, $\rational$-split, semisimple algebraic group over\/~$\rational$.
A point~$\xi$ on the visual boundary of the corresponding symmetric space $X = K \backslash \GG(\real)$ is \textbf{not} a horospherical limit point for~$\GG(\integer)$ if and only if $\xi$ is fixed by some parabolic $\rational$-subgroup of~$\GG$.
\end{cor}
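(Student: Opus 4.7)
The plan is to deduce the corollary from \cref{HoroLimIffgRat}, reducing to showing that, when $\GG$ is $\rational$-split, a point $\xi \in \bdry X$ lies on the boundary of some $\rational$-split flat if and only if $\xi$ is fixed by some parabolic $\rational$-subgroup of~$\GG$. The starting observation, used in both directions, is that in a $\rational$-split group every maximal $\rational$-split torus is a maximal torus of~$\GG$, so after enlarging the torus one may always assume a given $\rational$-split flat is maximal, with visual boundary a full apartment of the spherical Tits building of $G = \GG(\real)$.

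For the $(\Leftarrow)$ direction, I would take a parabolic $\rational$-subgroup $\PP$ fixing~$\xi$ and choose a maximal $\rational$-split torus $\TT \subseteq \PP$ (a standard fact from the theory of algebraic groups). By the starting observation, $\TT$ is a maximal torus, so the associated flat $F \subseteq X$ is a maximal $\rational$-split flat. The stabilizer $P_\xi$ of $\xi$ in $G$ contains $\PP(\real) \supseteq \TT(\real)$, and since parabolic subgroups of~$G$ containing the maximal $\real$-split torus $\TT(\real)$ are precisely the stabilizers of the facets of the apartment $\bdry F$, the point~$\xi$ must lie in~$\bdry F$. Hence $\xi$ is not a horospherical limit point by \cref{HoroLimIffgRat}.

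For the $(\Rightarrow)$ direction, I would start with $\xi$ on the boundary of a $\rational$-split flat, and enlarge to a maximal flat $F$ corresponding to a maximal $\rational$-split (hence maximal) torus~$\TT$. Because Euclidean translations of~$F$ fix every point of~$\bdry F$, we have $\TT(\real) \subseteq P_\xi$, and the Zariski density of $\TT(\real)$ in~$\TT$ implies that the algebraic parabolic $\PP_\xi \subseteq \GG$ satisfying $\PP_\xi(\real) = P_\xi$ contains~$\TT$. The key algebraic input is then: since $\TT$ is $\rational$-split, every $\TT$-root and root subgroup $\UU_\alpha$ of~$\GG$ is defined over~$\rational$, so any parabolic containing~$\TT$ (being generated by $\TT$ together with a subset of the $\UU_\alpha$) is a $\rational$-subgroup. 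Thus $\PP_\xi$ is a parabolic $\rational$-subgroup fixing~$\xi$. The step requiring the most care is the geometric dictionary between facets of $\bdry F$ and parabolic subgroups of~$G$ containing $\TT(\real)$ --- standard once one identifies $\bdry X$ with the spherical Tits building of~$G$ --- but otherwise the argument is a routine translation between algebraic and geometric data, with all the real work absorbed into \cref{HoroLimIffgRat}.
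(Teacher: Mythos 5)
Your argument is correct and matches the paper's implicit reasoning: \cref{QsplitHoroLimits} is presented in the paper as a restatement of \cref{RatBldg} (equivalently, \cref{HoroLimIffgRat}) using the fact that, when $\GG$ is $\rational$-split, the union of boundaries of $\rational$-split flats (the rational Tits building) coincides with the set of points fixed by proper parabolic $\rational$-subgroups, which is exactly the equivalence you establish. The two ingredients you identify are the right ones: (i) in a $\rational$-split group every maximal $\rational$-split torus is a maximal torus, so $\rational$-split flats sit inside maximal flats, and (ii) all root subgroups with respect to a $\rational$-split maximal torus are defined over $\rational$, so any parabolic containing that torus is a parabolic $\rational$-subgroup. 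One small simplification: for your $(\Leftarrow)$ direction you can invoke \cref{AFixesXi} directly (if $\TT(\real)$ fixes $\xi$ and $x\TT(\real)$ is a maximal flat, then $\xi$ is on its boundary) rather than unwinding the facet--stabilizer correspondence, though what you wrote is also valid.
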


\begin{rems} \label{GenRems} \ 
\noprelistbreak
	\begin{enumerate} \itemsep=\smallskipamount

	\item The set $\bigcup_{g \in G_{\rational}} Bg$ in the statement of \cref{RatBldg} is known as the ``rational Tits building'' of~$\GG$ \cite[p.~324]{Leuzinger-Tits}. Thus, the result states that the set of horospherical limit points of $\GG(\integer)$ is equal to the complement of the rational Tits building of~$\GG$.
	This was conjectured by W.\,H.\,Rehn \cite{RehnThesis} (in somewhat less generality), but the inclusion ($\supset$) has remained open even for the case where $\GG(\integer) = \SL_n(\integer)$ with $n \ge 3$. 
	
	\item A geodesic ray $\gamma^+$ is \emph{divergent} if the function $\gamma^+ \colon \real^+ \to X/\Gamma$ is a proper map. It is easy to see that if the endpoint of~$\gamma^+$ is not a horospherical limit point, then $\gamma^+$ must be divergent. The converse is not true, because S.\,G.\,Dani \cite{Dani-Divergent} has shown that if $\rank{\real} X \ge 2$, then there are many geodesic rays that diverge for ``non-obvious'' reasons, and \cref{RatBldg} shows that the endpoints of such rays are horospherical limit points.

	\item In \cref{QsplitHoroLimits}, the assumption that $\GG$ is $\rational$-split can be weakened to the assumption that $\rank{\rational} \GG = \rank{\real} \GG$.
	We also note that this \lcnamecref{QsplitHoroLimits} does not assume $X$~has the Killing-form metric --- it is valid for every symmetric metric on $K \backslash \GG(\real)$ (if $\rank{\rational} \GG = \rank{\real} \GG$).

	\item Given a locally symmetric space $X/\Gamma$ and a finitely generated $\Gamma$-module $A$, a corresponding set $\Sigma_\Gamma( X ; A )$ of \emph{horospherical limit points} has been defined by R.\,Bieri and R.\,Geoghegan \cite{BieriGeoghegan}. It reduces to \cref{HoroLimDefn} when $A = \integer$ is the trivial $\Gamma$-module, but it would be interesting to extend \cref{HoroLimIffgRat} by calculating $\Sigma_\Gamma( X ; A )$ for other $\Gamma$-modules.
	
	\end{enumerate}
\end{rems}

The proof of \cref{HoroLimIffgRat} is short (about a page for each direction), but relies on definitions and other background material from the theory of algebraic groups, Lie groups, and unipotent dynamics.
These preliminaries are presented in \cref{PrelimSect}.
\Cref{OnFlatNotHoroSect} proves that the boundary points of a $\rational$-split flat are not horospherical. The other direction of \cref{HoroLimIffgRat} is proved in \cref{MainSect}. (See \cref{HoroLimIff} for a summary that provides several alternative formulations of \cref{HoroLimIffgRat}.) The \hyperref[NonarithSect]{final section} presents a generalization of \cref{HoroLimIffgRat} that  allows $\Gamma$ to be non-arithmetic.

See \cite{MorrisWortman} for a generalization of \cref{HoroLimIffgRat} that allows $\Gamma$ to be an $S$-arithmetic group.

\begin{ack}
We thank Ross Geoghegan for explaining the conjecture of Rehn that motivated this line of research, and we thank the Park City Mathematics Institute for bringing the two of us together and providing an opportunity to start work on this problem. 
We also thank Tam Nguyen Phan and Kevin Wortman for helpful conversations about the structure of horospheres in symmetric spaces of higher rank. In addition, we thank the latter for calling  \cite{Hattori-GeomLimSets} to our attention, and pointing out that it proves one direction of \cref{HoroLimIffgRat}. 

\end{ack}

\section{Preliminaries} \label{PrelimSect}
\begin{notation}
For any Lie group~$H$, we let $H^{\circ}$ be the identity component of~$H$.
\end{notation}

\begin{notation}
$H^g = g^{-1} H g$.
\end{notation}

\subsection{Horospherical limit points}

We record a few well-known, elementary observations.

\begin{lem} \label{CGammaHoroLimDefn}
$\xi$ is a horospherical limit point for\/~$\Gamma$ iff there is a compact subset~$C$ of~$X$, such that $C \cdot \Gamma$ intersects every horoball based at~$\xi$.
\end{lem}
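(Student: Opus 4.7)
The plan is to reduce everything to two standard facts about the visual boundary of~$X$: every horoball based at~$\xi$ can be written as a sublevel set $H_r := \{\, y \in X : b_\xi(y) \le r \,\}$ of a Busemann function~$b_\xi$ centered at~$\xi$, with $r$ ranging over all of~$\real$; and every such Busemann function is $1$-Lipschitz with respect to the symmetric-space metric. The direction ($\Rightarrow$) is then immediate: if $\xi$ is a horospherical limit point, take $C = \{x\}$, which is compact and has $C \cdot \Gamma = x \cdot \Gamma$ meeting every horoball based at~$\xi$ by \cref{HoroLimDefn}.

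For the converse, I would let $C \subset X$ be compact with $C \cdot \Gamma$ meeting every horoball based at~$\xi$, and set $R := \max_{c \in C} d(x, c) < \infty$. Given an arbitrary horoball $H_r$ based at~$\xi$, the idea is to apply the hypothesis not to $H_r$ itself but to the slightly deeper horoball $H_{r-R}$. This produces $c \in C$ and $\gamma \in \Gamma$ with $b_\xi(c \gamma) \le r - R$; since $\gamma$ acts on~$X$ by isometries, $d(x \gamma, c \gamma) = d(x, c) \le R$, so the $1$-Lipschitz property of~$b_\xi$ yields $b_\xi(x \gamma) \le r$, i.e.\ $x \gamma \in H_r$. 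Hence $x \cdot \Gamma$ meets every horoball based at~$\xi$, so $\xi$ is a horospherical limit point.

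No step here is really an obstacle: the two small points that need attention are that $H_{r-R}$ is genuinely a horoball based at the same point~$\xi$ (which is built into the parametrization of such horoballs by $r \in \real$), and that elements of~$\Gamma$ act on~$X$ by isometries of the Killing-form metric (so the Lipschitz estimate survives $\gamma$-translation). The real content of the \lcnamecref{CGammaHoroLimDefn} is conceptual: enlarging the base point~$x$ to a bounded set~$C$ does not enlarge the set of horospherical limit points, which is what justifies the parenthetical claim in \cref{HoroLimDefn} that the definition is independent of the basepoint.
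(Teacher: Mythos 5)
Your proof is correct and is essentially the same argument as in the paper: both directions proceed identically, with ($\Rightarrow$) taking $C = \{x\}$ and ($\Leftarrow$) shrinking a given horoball by the radius $R$ bounding $C$, finding a $\Gamma$-translate of a point of~$C$ in the shrunken horoball, and using isometry of the $\Gamma$-action to conclude. The only cosmetic difference is that you phrase the shrinking step in terms of the $1$-Lipschitz property of Busemann functions, whereas the paper phrases it as nesting of horoballs at distance $> R$; these are the same fact.
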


\begin{proof}
($\Rightarrow$) Let $C = \{x\}$, where $x$ is the basepoint chosen in \cref{HoroLimDefn}.

($\Leftarrow$) Choose $R > 0$, such that $d(x,c) < R$ for all $c \in C$. Any horoball $\mathcal{B}_0$ based at~$\xi$ contains a smaller horoball~$\mathcal{B}_R$, such that the distance from~$\mathcal{B}_R$ to the complement of~$\mathcal{B}$ is greater than~$R$. By assumption, there exist $c \in C$ and $\gamma \in \Gamma$, such that $c \gamma \in \mathcal{B}_R$. Since $d(x\gamma, c\gamma) = d(x,c) < R$, this implies $x \gamma \in \mathcal{B}_0$.
\end{proof}

\Cref{CGammaHoroLimDefn} implies that the set of horospherical limit points is independent of the choice of the basepoint $x \in X$, and also does not change if we replace $\Gamma$ by any finite-index subgroup. Therefore, we have the following consequence:

\begin{cor} \label{InvtUnderCommGamma}
The set of horospherical limit points for\/~$\Gamma$ is invariant under the action of the commensurator group\/ $\mathop{\mathrm{Comm}}_G(\Gamma)$ on $\bdry X$. In particular, if\/ $\Gamma = G_{\integer}$ \textup(and $G$ is defined over\/~$\rational$\textup), then the set of horospherical limit points for~$G_{\integer}$ is invariant under the action of $G_{\rational}$ on $\bdry X$. 
\end{cor}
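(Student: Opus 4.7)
The plan is to combine the two invariance properties highlighted immediately before the statement: by \cref{CGammaHoroLimDefn}, the set of horospherical limit points for $\Gamma$ is independent of the basepoint $x$, and it is unchanged when $\Gamma$ is replaced by a finite-index subgroup. Both observations will be used once.

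Given $g \in \mathop{\mathrm{Comm}}_G(\Gamma)$, I would set $\Gamma' = \Gamma \cap g\Gamma g^{-1}$, which by definition of the commensurator has finite index in $\Gamma$ and satisfies $g^{-1}\Gamma' g \subseteq \Gamma$. Suppose $\xi \in \bdry X$ is a horospherical limit point for $\Gamma$; by the finite-index invariance, $\xi$ is also horospherical for $\Gamma'$. Now $g$ acts as an isometry of $X$ sending $\xi$ to $\xi g$, so any horoball $\mathcal{B}'$ based at $\xi g$ has the form $\mathcal{B} \cdot g$ for some horoball $\mathcal{B}$ based at $\xi$ (namely $\mathcal{B} = \mathcal{B}' \cdot g^{-1}$). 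By hypothesis, there exists $\gamma' \in \Gamma'$ with $x \gamma' \in \mathcal{B}$; then
\[
(x\gamma')\,g \;=\; (xg)\bigl(g^{-1}\gamma' g\bigr) \;\in\; \mathcal{B} \cdot g \;=\; \mathcal{B}',
\]
and $g^{-1}\gamma' g \in \Gamma$. Thus the single compact set $C = \{xg\}$ has the property that $C \cdot \Gamma$ meets every horoball based at $\xi g$, so \cref{CGammaHoroLimDefn} concludes that $\xi g$ is a horospherical limit point for $\Gamma$.

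The ``in particular'' clause is then immediate from the classical fact that $G_{\rational}$ commensurates $G_{\integer}$, so $G_{\rational} \subseteq \mathop{\mathrm{Comm}}_G(G_{\integer})$. There is no serious obstacle to carry out: the whole proof is a one-line change of variables inside an orbit, plus the two appeals to \cref{CGammaHoroLimDefn} to handle the finite-index passage and the basepoint shift from $x$ to $xg$.
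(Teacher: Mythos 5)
Your argument is correct and is precisely the expansion of the paper's one-line justification: the paper just observes that \cref{CGammaHoroLimDefn} gives basepoint-independence and finite-index invariance, and says the corollary follows; you have filled in the change-of-variables by $g$ and the passage to $\Gamma' = \Gamma \cap g\Gamma g^{-1}$ that makes this precise. (The reverse inclusion, which you leave implicit, follows by running the same argument with $g^{-1}$.)
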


\begin{lem} \label{HoroSubgrp}
Let 
\noprelistbreak
	\begin{itemize}
	\item $A$ be a maximal $\real$-split torus of~$G$,
	\item $x \in X = K \backslash G$, such that $x A$ is a flat in $X$,
	\item $\{a^t\}$ be a nontrivial one-parameter subgroup of~$A$,
	\item $\xi \in \bdry X$ be the endpoint of the ray $\{x a^t\}_{t=0}^\infty$,
	\item $A_\perp$ be the codimension-one subgroup of~$A$ that is orthogonal to $\{a^t\}$ \textup(with respect to the Killing form\/\textup),
	\item $A^+$ be a Weyl chamber of~$A$ that contains the ray $\{a^t\}_{t=0}^\infty$,
	and
	\item $N$ be the maximal unipotent subgroup of~$G$, such that $a^t ua^{-t} \to e$ as $t \to +\infty$ for all $u \in N$ and all $a$ in the interior of~$A^+$.
	\end{itemize}
Then 
\noprelistbreak
	\begin{enumerate}
	\item \label{HoroSubgrp-subgrp}
	$x a^t A_\perp N$ is a horosphere based at~$\xi$, for each $t \in \real$.
	\item \label{HoroSubgrp-limit}
	$\xi$ is not a horospherical limit point for\/~$\Gamma$ iff 
		$$\lim_{t \to \infty} \, \sup_{g \in a^t A_\perp N} \, \inf_{\gamma \in \Gamma \smallsetminus \{e\}} \| g \gamma g^{-1} - e \| = 0 ,$$
	where $e$ is the identity element of~$G$.
	\end{enumerate}
\end{lem}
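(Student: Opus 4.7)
The plan is to establish part (1) via the Busemann function and part (2) via a two-step reformulation in $X/\Gamma$ combined with Mahler compactness.

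For part (1), I would define $b_\xi(y) := \lim_{t\to\infty}\bigl(d(y, xa^t) - t\|\dot a\|\bigr)$ and verify two invariance properties. On the Euclidean flat $xA$, $b_\xi$ is linear in the $\dot a$-direction and constant on $A_\perp$-cosets, so $b_\xi(xa^s a_\perp) = -s\|\dot a\|$. For any $n \in N$, right multiplication by $n$ is an isometry of $X$ fixing $\xi$ (because $a^t n a^{-t}\to e$ makes the rays $\{xa^t\}$ and $\{xa^t n\}$ asymptotic), so $b_\xi \circ R_n = b_\xi + c(n)$ for some additive constant; evaluation at the basepoint using $na^{-t} = a^{-t}(a^t n a^{-t})$ and right-invariance of $d$ forces $c(n) = 0$. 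Therefore $xa^t A_\perp N$ lies inside the horosphere $\{b_\xi = -t\|\dot a\|\}$, and equality follows from the dimension match $\dim(A_\perp N) = (r-1) + \dim N = \dim X - 1$ (using Iwasawa $\dim X = r + \dim N$) together with injectivity of the orbit map via Iwasawa uniqueness.

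For part (2), I would first prove an intermediate equivalence: $\xi$ is not a horospherical limit point if and only if the images of the horospheres $H_t := xa^t A_\perp N$ in $X/\Gamma$ eventually leave every compact subset. The $(\Rightarrow)$ direction uses \cref{CGammaHoroLimDefn}: given a compact $C\subseteq X/\Gamma$ with compact lift $\widetilde C\subseteq X$, the hypothesis furnishes a horoball disjoint from $\widetilde C\cdot\Gamma$, and all sufficiently deep $H_t$ lie in that horoball, so their images miss $C$. The $(\Leftarrow)$ direction is contrapositive: if $\xi$ were a horospherical limit, every horoball would contain some $x\gamma_t$, so a horosphere $H_{s_t}$ with $s_t\to\infty$ would contain $x\gamma_t$, making its image pass through the fixed point $x\cdot\Gamma \in X/\Gamma$ and violating the escape property.

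The second step is Mahler compactness for $X/\Gamma = K\backslash G/\Gamma$: with a $K$-biinvariant norm on $G$, the sets $C_\epsilon := \{Kg\Gamma : \inf_{\gamma\in\Gamma\setminus\{e\}}\|g\gamma g^{-1} - e\|\geq\epsilon\}$ form a cofinal family of compact subsets of $X/\Gamma$. Combining with the intermediate equivalence, "$H_t$ eventually leaves every compact" translates precisely into "$H_t$ eventually avoids every $C_\epsilon$", which is the stated condition $\sup_{g\in a^tA_\perp N}\inf_{\gamma\neq e}\|g\gamma g^{-1} - e\|\to 0$. The main technical hurdle I anticipate is verifying Mahler compactness in this double-coset form: ensuring both a $K$-invariant choice of norm (so that $C_\epsilon$ descends to $X/\Gamma$) and the applicability of the compactness criterion, which uses that $\Gamma$ is a lattice in the semisimple group $G$.
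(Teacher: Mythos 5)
Your argument for part~(2) is the same as the paper's, merely unpacked: the paper cites \cref{CGammaHoroLimDefn} and the Mahler-type compactness criterion from \cite[Thm.~1.12]{RaghunathanBook} in a single sentence; your intermediate equivalence (horospheres escape every compact set) together with the cofinal family of sets $C_\epsilon$ is exactly how that criterion gets applied. No issues there.

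For part~(1) you take a genuinely different route. The paper proves that $N$ preserves every horosphere by a group-theoretic argument: the group $P=C_G(\{a^t\})N$ fixes~$\xi$, hence acts on the $\real$-parametrized family of horospheres via a continuous homomorphism $P\to\real$, and every such homomorphism is trivial on~$N$. Your direct Busemann-function computation is a reasonable alternative, but two details need tightening. First, to force $c(n)=0$ you need the estimate $d(xa^tn,\,xa^t)=d\bigl(x\cdot(a^tna^{-t}),\,x\bigr)\to 0$, which shows the rays $\{xa^t\}$ and $\{xa^tn\}$ actually converge (not merely remain at bounded distance); the identity you invoke, $na^{-t}=a^{-t}(a^tna^{-t})$, combined with right-invariance and the triangle inequality only bounds $|c(n)|$ above by $d(x,xn)$, a constant rather than a vanishing quantity. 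You want the equivalent factorization $a^tn=(a^tna^{-t})a^t$ instead. Second, the dimension count together with injectivity shows only that $xa^tA_\perp N$ is an \emph{open} submanifold of the horosphere; to conclude equality you still need closedness. The paper avoids this by noting that the orbits $xa^sA_\perp N$ (as $s$ ranges over~$\real$) disjointly exhaust~$X$ by the Iwasawa decomposition, as do the horospheres, so the two partitions of~$X$ must coincide. Both points are repairable, but they are genuine gaps in the argument as written.
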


\begin{proof}
\pref{HoroSubgrp-subgrp} 
Let $P = C_G \bigl( \{a^t\} \bigr) \, N$. For each $g \in P$, the geodesic ray $\{xa^t g\}_{t \ge 0}$ is at bounded distance from $\{xa^t\}_{t \ge 0}$ (because $\{\,a^t g a^{-t} \mid t \ge 0 \,\}$ is a bounded set). Therefore, $P$ fixes the point~$\xi$, so it acts (continuously) on the set of horospheres based at~$\xi$. Since these horospheres are parametrized by~$\real$, and every continuous homomorphism $P \to \real$ is trivial on~$N$, we conclude that $N$ fixes every horosphere based at~$\xi$. Therefore, $x a^t A_{\perp} N$ is contained in the horosphere through $x a^t$. Since the Iwasawa decomposition $G = KAN$ tells us that $G$ is the disjoint union of these sets (and every point of~$X$ is on a unique horosphere), the set must be the entire horosphere.

\pref{HoroSubgrp-limit}~From~\pref{HoroSubgrp-subgrp}, we know that each horoball based at~$\xi$ is of the form $\bigcup_{t \ge t_0} x a^t A_\perp N$ (for some~$t_0$). Therefore, the equivalence in~\pref{HoroSubgrp-limit} is a restatement of \cref{CGammaHoroLimDefn} (by using \cite[Thm.~1.12, p.~22]{RaghunathanBook}).
\end{proof}

\begin{lem} \label{InSpanMustLarge}
Suppose
\noprelistbreak
	\begin{enumerate}
	\item $v,v_1,\ldots,v_n \in \real^k$, with $v \neq 0$,
	\item \label{InSpanMustLarge-span}
	$v$ is in the span of $\{v_1,\ldots,v_n\}$,
	\item \label{InSpanMustLarge-acute}
	$\langle v \mid v_i \rangle \ge 0$ for all~$i$,
	\item \label{InSpanMustLarge-obtuse}
	$\langle v_i \mid v_j \rangle \le 0$ for $i \neq j$,
	and
	\item $T \in \real^+$.
	\end{enumerate}
Then, for all sufficiently large $t \in \real^+$ and all $w \perp v$, there is some~$i$, such that $\langle tv + w | v_i \rangle > T$.
\end{lem}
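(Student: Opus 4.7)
The plan is to first establish the algebraic reduction that the hypotheses force an expansion $v = \sum_i c_i v_i$ with \emph{all} $c_i \ge 0$, and then to obtain the conclusion by a one-line averaging argument that exploits $w \perp v$.

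For the reduction, I would start from any expansion $v = \sum_i c_i v_i$ provided by hypothesis~\pref{InSpanMustLarge-span}, partition the indices into $I_+ = \{i : c_i \ge 0\}$ and $I_- = \{j : c_j < 0\}$, and set $u = \sum_{j \in I_-} |c_j|\, v_j$, so that $v + u = \sum_{i \in I_+} c_i v_i$. Then I would compute $\langle u, v + u \rangle$ in two ways. Expanding the product form gives
$$\langle u, v+u \rangle \;=\; \sum_{j \in I_-,\; i \in I_+} |c_j|\, c_i\, \langle v_j, v_i \rangle \;\le\; 0,$$
since every occurring pair has $i \ne j$ (as $c_i \ge 0 > c_j$) and hypothesis~\pref{InSpanMustLarge-obtuse} makes each $\langle v_j, v_i \rangle \le 0$. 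On the other hand, $\langle u, v+u \rangle = \langle u, v \rangle + \|u\|^2 \ge \|u\|^2$, where the inequality applies hypothesis~\pref{InSpanMustLarge-acute} to each term of $\langle u, v \rangle = \sum_{j \in I_-} |c_j| \langle v_j, v \rangle$. The two bounds together force $\|u\|^2 = 0$, hence $u = 0$ and $v = \sum_{i \in I_+} c_i v_i$ with all coefficients nonnegative.

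For the main conclusion, I would argue by contradiction: suppose some $t \in \real^+$ and some $w \perp v$ satisfy $\langle tv + w, v_i \rangle \le T$ for every~$i$. Multiplying the $i$-th inequality by $c_i \ge 0$ and summing, the left side collapses via $\sum_i c_i v_i = v$ and $\langle w, v \rangle = 0$ to $t\|v\|^2$, while the right side is $T \sum_i c_i$. Hence $t \le T \sum_i c_i / \|v\|^2$, and for any $t$ strictly greater than this threshold — a bound independent of $w$ — the assumed inequality must fail for some~$i$, giving the desired $i$ with $\langle tv + w, v_i \rangle > T$.

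The only step with genuine geometric content is producing the nonnegative expansion of $v$; this is the standard cone-theoretic observation for vectors of pairwise obtuse angles and is where hypotheses~\pref{InSpanMustLarge-acute} and~\pref{InSpanMustLarge-obtuse} actually enter. Everything after that is essentially formal, with the orthogonality $w \perp v$ doing all the work to absorb the $w$-contribution in the weighted average and to make the threshold on~$t$ uniform over $w$.
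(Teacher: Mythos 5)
Your proof is correct, and it takes a genuinely different (and somewhat slicker) route than the paper's. Both proofs start by showing the coefficients in the expansion $v = \sum_i c_i v_i$ can be taken nonnegative, via essentially the same ``pairwise obtuse'' computation. But from there they diverge: the paper first reduces to the case where $\{v_1,\ldots,v_n\}$ is a linearly independent set with all $c_i$ strictly positive, then proves a uniform separation claim (there is $\epsilon > 0$ such that every $w \perp v$ has $\langle w \mid v_i \rangle \ge \epsilon\|w\|$ for some $i$), and finally combines this with $\langle tv \mid v_j \rangle \to \infty$ to conclude. Your argument replaces all of that with a one-line weighted average: if $\langle tv + w, v_i \rangle \le T$ for every $i$, multiplying by $c_i \ge 0$ and summing collapses the left side to $t\|v\|^2$ (using $w \perp v$), giving the explicit threshold $t \le T\sum_i c_i / \|v\|^2$. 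This avoids passing to a linearly independent subset, avoids the compactness-flavored $\epsilon$-claim entirely, and yields a clean explicit bound on the sufficiently-large $t$, uniform in $w$. One small point worth noting for completeness: since $v \neq 0$ and all $c_i \ge 0$, one has $\sum_i c_i > 0$, so the threshold is finite and the argument is nonvacuous; you might mention this explicitly.
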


\begin{proof}
This is a standard argument. 

From \pref{InSpanMustLarge-span}, we may write $v = \sum_i c_i v_i$ with $c_i \in \real$. 
Also, by passing to a subset, we may assume $\{v_1,\ldots,v_n\}$ is linearly independent, and that $c_i \neq 0$ for every~$i$. Then, by replacing $\real^k$ with the span of  $\{v_1,\ldots,v_n\}$, we may assume that $\{v_1,\ldots,v_n\}$ is a basis.

Permute the elements of $\{v_1,\ldots,v_n\}$ so that the negative values of~$c_i$ come first. That is, there is some $k$ with $c_i < 0$ for $i \le k$ and $c_i > 0$ for $i > k$. Let $z = \sum_{i\le k} c_i v_i$. Then
	$$ \langle z \mid v \rangle 
	= \sum_{i\le k} c_i \langle v_i \mid v \rangle
	= \sum_{i\le k} \bigl( {<0} \bigr) \bigl({\ge 0} \bigr)
	\le 0 $$
and
	$$ \langle z \mid v \rangle 
	= \Big\langle z \mathrel{\Big|} z + \sum_{j > k} c_j v_j \Bigr\rangle
	= \langle z \mid z \rangle + \sum_{i \le k < j} c_i c_j \langle v_i \mid v_j  \rangle
	= \bigl( {\ge0} \bigr) + \sum_{i \le k < j} \bigl({< 0}\bigr) \bigl( {> 0} \bigr) \bigl( {\le 0}\bigr)
	\ge 0 .$$
So we must have equality throughout, which implies $\langle z \mid z \rangle = 0$. Therefore $z = 0$, so we must have $k = 0$ (since $\{v_i\}_{i=1}^n$ is linearly independent). This means $c_i > 0$ for all~$i$.

We claim there is some $\epsilon > 0$, such that, for every $w \perp v$, there exists~$i$, such that $\langle w \mid v_i \rangle \ge \epsilon \|w\|$. Suppose not. Then there must be some nonzero $w \perp v$, such that $\langle w \mid v_i \rangle \le 0$ for all~$i$. So
	$$ 0 = \langle v \mid w \rangle = \sum_{i} c_i \langle v_i \mid w \rangle 
	= \sum_{i} \bigl( {> 0} \bigr) \bigl( {\le 0} \bigr) \le 0 .$$
Hence, we must have $\langle v_i \mid w \rangle = 0$ for all~$i$. Since $\{v_i\}_{i=1}^n$ is a basis, this implies $w = 0$, which is a contradiction.

Since $\{v_i\}$ is a basis (and $v$ is nonzero), we must have $\langle v \mid v_j \rangle \neq 0$ for some~$j$. Then $\langle tv \mid v_j \rangle$ is large whenever $t$~is large. Thus, if the conclusion of the \lcnamecref{InSpanMustLarge} fails to hold, then $\langle w \mid v_j \rangle$ must be large (and negative), so $\|w\|$ must be large. By making it so large that $\epsilon \|w\| \ge T$, and applying the claim of the preceding paragraph, we have 
$\langle tv + w | v_i \rangle \ge 0 + T = T$, as desired.
\end{proof}

\subsection{Parabolic subgroups}

\begin{prop}[``real Langlands decomposition'' {\cite[p.~81]{WarnerBook}}] 
If $P$ is a parabolic subgroup of a connected, semisimple Lie group~$G$ with finite center, then we may write $P = MTU$, where
\noprelistbreak
	\begin{itemize}
	\item $T$ is an $\real$-split torus,
	\item $M$ is a connected, reductive subgroup that centralizes~$T$ and has compact center,
	and
	\item $U$ is the unipotent radical of~$P$.
	\end{itemize}
\end{prop}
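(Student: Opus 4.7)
The plan is to conjugate~$P$ into standard form with respect to a fixed Iwasawa decomposition, and then read off the three factors directly from the restricted root space decomposition of~$\mathfrak{g}$.

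First, fix a Cartan involution $\theta$ of~$G$ (using that $G$ has finite center) with fixed-point set $K$ a maximal compact subgroup, and let $\mathfrak{g} = \mathfrak{k} \oplus \mathfrak{s}$ be the associated Cartan decomposition. Choose a maximal abelian subspace $\mathfrak{a} \subseteq \mathfrak{s}$, set $A := \exp \mathfrak{a}$, pick a set $\Delta$ of simple restricted roots, and form the minimal parabolic $P_0 = Z_K(A) \cdot A \cdot N$, where $N$ is the connected subgroup with Lie algebra $\bigoplus_{\alpha > 0} \mathfrak{g}_\alpha$. Every parabolic subgroup is conjugate to one that contains~$P_0$, and the claimed decomposition is preserved by conjugation, so I may assume $P \supseteq P_0$; such a $P$ is then determined by a subset $\Theta \subseteq \Delta$, namely the set of simple restricted roots $\alpha$ for which $\mathfrak{g}_{-\alpha}$ is also contained in the Lie algebra of~$P$.

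Second, I would define
\[
    \mathfrak{a}_\Theta := \bigcap_{\alpha \in \Theta} \ker \alpha,
    \qquad T := \exp \mathfrak{a}_\Theta,
\]
and let $U$ be the connected subgroup of~$G$ whose Lie algebra is $\bigoplus \mathfrak{g}_\beta$, summed over positive restricted roots~$\beta$ not in the $\integer$-span of~$\Theta$. A routine check on root brackets shows that $T$ is an $\real$-split torus, that $U$ is unipotent and normal in~$P$, and that $U$ is the unipotent radical of~$P$. Setting $L := Z_G(T)^{\circ}$ gives a $\theta$-stable reductive subgroup, and the root-space accounting yields a semidirect decomposition $P = L \ltimes U$. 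Finally, decompose the connected center as $Z(L)^{\circ} = T \cdot S$, with $T \cap S$ finite, where $S$ is the maximal compact subtorus of $Z(L)^{\circ}$, and let $M$ be the subgroup of~$L$ generated by the connected semisimple part $[L,L]^{\circ}$, the compact central torus~$S$, and $Z_K(\mathfrak{a})$. Then $M$ is connected and reductive, has compact center, centralizes~$T$, and satisfies $L = MT$, yielding $P = MTU$.

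The main obstacle is Step~3: verifying that $M$ is connected with compact center and that $L = MT$ is an almost direct product. The slickest route is to define~$M$ invariantly as the identity component of $\bigcap_{\chi} \ker |\chi|$, where $\chi$ ranges over the continuous real-valued characters of~$L$; the required properties then follow from the general structural fact that a real reductive Lie group in the Harish-Chandra class has its connected center as an almost direct product of a maximal $\real$-split central torus with a subgroup of compact identity component, after which everything reduces to applying the Iwasawa decomposition inside~$L$.
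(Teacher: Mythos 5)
The paper does not prove this proposition; it is quoted from Warner's book as a known structural fact, so there is no in-paper argument to compare against. Your sketch is the standard construction of the Langlands decomposition of a parabolic: reduce by conjugation to a standard parabolic $P_\Theta \supseteq P_0$, set $T = \exp\mathfrak{a}_\Theta$, take $U$ to be the nilradical, put $L = Z_G(T)$, and split $L$ as $MT$ by separating off the maximal $\real$-split central torus.

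There is, however, a concrete gap at the point where you assert ``$M$ is connected.'' The group $Z_K(\mathfrak{a})$ that you include among the generators of~$M$ is disconnected in general: for $G = \SL_2(\real)$ with $P$ the upper-triangular Borel, one has $Z_K(\mathfrak{a}) = \{\pm I\}$, so your construction yields $M = \{\pm I\}$, which is not connected. If you instead take the identity component of $\bigcap_{\chi} \ker|\chi|$, as your last paragraph recommends, then in this example $M$ is trivial and $MTU = AN$ is a proper (index-two) subgroup of~$P$, so the identity $P = MTU$ fails. Relatedly, your step ``root-space accounting yields $P = L \ltimes U$'' really needs $L = Z_G(T)$ rather than $L = Z_G(T)^{\circ}$: with $L^{\circ}$ the right-hand side is connected, while $P$ need not be. This is in fact a tension built into the statement as quoted --- in Warner's and Knapp's treatments the $M$-factor is allowed to be disconnected precisely because it must contain $Z_K(\mathfrak{a})$ --- so to make your write-up watertight you should either drop ``connected'' from the description of~$M$, or replace $P = MTU$ by $P^{\circ} = M^{\circ} T U$ together with the observation that $MTU$ has finite index in~$P$.
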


\begin{lem} \label{SplitPerpNoChar}
Let $Q$ be a field of characteristic~$0$.
If $H$ is a reductive $Q$-subgroup of an algebraic $Q$-group~$G$, and $H$ has no nontrivial $Q$-characters, then $H$ is orthogonal to every $Q$-split torus~$T$ that centralizes it.
\end{lem}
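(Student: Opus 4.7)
The plan is to use the weight-space decomposition of $\mathfrak{g}$ under the adjoint action of $T$ to reduce the orthogonality statement to the vanishing of certain traces, and then extract that vanishing from the no-characters hypothesis. Write $\kappa$ for the Killing form of $\mathfrak{g}$, and $\mathfrak{h}, \mathfrak{t}$ for the Lie algebras of $H$ and $T$. What must be shown is that $\kappa(X, Y) = \mathrm{tr}\bigl(\mathrm{ad}(X)\, \mathrm{ad}(Y)\bigr) = 0$ for every $X \in \mathfrak{h}$ and every $Y \in \mathfrak{t}$.

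Because $T$ is $Q$-split, the adjoint action of $T$ diagonalizes $\mathfrak{g}$ into a direct sum of weight spaces
$$\mathfrak{g} = \bigoplus_\alpha \mathfrak{g}_\alpha$$
each defined over~$Q$, where $\alpha$ runs over characters of~$T$ and $\mathrm{ad}(Y)$ acts on $\mathfrak{g}_\alpha$ as the scalar $\alpha(Y)$. Because $H$ centralizes~$T$, the adjoint action of~$H$ preserves each $\mathfrak{g}_\alpha$, yielding a $Q$-rational representation $\rho_\alpha \colon H \to \mathrm{GL}(\mathfrak{g}_\alpha)$. The determinant $\det \rho_\alpha$ is then a $Q$-character of~$H$, which by hypothesis must be trivial; differentiating at the identity gives $\mathrm{tr}\bigl(\mathrm{ad}(X)|_{\mathfrak{g}_\alpha}\bigr) = 0$ for every $X \in \mathfrak{h}$ and every~$\alpha$. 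Since $[\mathfrak{h}, \mathfrak{t}] = 0$, the operator $\mathrm{ad}(X)$ commutes with $\mathrm{ad}(\mathfrak{t})$ and hence preserves each weight space, so
$$\kappa(X, Y) = \sum_\alpha \alpha(Y)\, \mathrm{tr}\bigl(\mathrm{ad}(X)|_{\mathfrak{g}_\alpha}\bigr) = 0.$$

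The one point that requires care — rather than being a genuine obstacle — is verifying that the weight decomposition is defined over~$Q$ and that each $\rho_\alpha$ is a $Q$-rational representation of~$H$; both follow immediately from the $Q$-split hypothesis on~$T$, and together they are precisely what is needed to make the no-$Q$-characters hypothesis on~$H$ applicable. Notably, the reductivity of~$H$ does not visibly enter this particular argument; it is presumably listed in the statement because the ``no nontrivial $Q$-characters'' condition is the natural hypothesis to impose on a reductive group.
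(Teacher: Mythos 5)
Your proof is correct but takes a route dual to the paper's. The paper decomposes $\mathfrak{g}$ into minimal $(\mathrm{Ad}_G H)$-invariant $Q$-subspaces~$V$ --- this is where reductivity of~$H$ is used --- then shows $\ad h$ is traceless on~$V$ (since $H$ with no $Q$-characters must act on~$V$ through $\SL(V)$) and that $\ad t$ is scalar on~$V$ (Schur's lemma, using $Q$-splitness of~$T$ to force the commuting, $Q$-diagonalizable operator $\ad t$ to act by a scalar on the minimal $H$-module rather than merely lying in a division algebra). You instead decompose $\mathfrak{g}$ into $T$-weight spaces: $Q$-splitness gives the decomposition over~$Q$ directly, $\ad t$ is scalar on each $\mathfrak{g}_\alpha$ by definition, and the no-$Q$-characters hypothesis makes $\det\rho_\alpha$ trivial, so $\ad X$ is traceless on each $\mathfrak{g}_\alpha$. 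The trace-vanishing in both arguments comes from the same source --- a $Q$-character of~$H$ must be trivial --- but by hanging the decomposition on~$T$ rather than on~$H$ you avoid invoking Schur's lemma and, as you correctly observe, avoid needing reductivity of~$H$ at all; your argument in fact proves the lemma for any $Q$-subgroup with no nontrivial $Q$-characters.
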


\begin{proof}
Let $\mathfrak{g}$, $\mathfrak{h}$, and~$\mathfrak{t}$  be the Lie algebra of $G$, $H$, and~$T$, respectively. Consider any minimal $(\mathrm{Ad}_G H)$-invariant $Q$-subspace~$V$ of~$\mathfrak{g}$. 
Since $H$ has no $Q$-characters, it must act on~$V$ via $\SL(V)$, so $\tr \bigl( (\ad h)|_V \bigr) = 0$ for every $h \in \mathfrak{h}$. On the other hand, since $T$ centralizes~$H$ (and is $Q$-split), Schur's Lemma tells us that any $t \in \mathfrak{t}$ acts by a scalar~$\lambda$ on~$V$. Therefore 
	$$\tr \bigl( (\ad h)(\ad t)|_V \bigr) = \lambda \cdot \tr \bigl( (\ad h)|_V \bigr) = \lambda \cdot 0 = 0 .$$
Since $H$ is reductive, we know that $\mathfrak{g}$ 
is the direct sum of such submodules~$V$, so the trace of $(\ad h)(\ad t)$ is~$0$.
This means $\mathfrak{h} \perp \mathfrak{t}$ (with respect to the Killing form).
\end{proof}

\begin{cor} \label{MPerpA}
If $P = MTU$ is a parabolic subgroup of~$G$, then $T$ is orthogonal to~$M$.
\end{cor}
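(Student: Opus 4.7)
The plan is to deduce the corollary directly from \cref{SplitPerpNoChar}, taking $Q = \real$ and $H = M$. Two of the hypotheses of that \lcnamecref{SplitPerpNoChar} come for free from the real Langlands decomposition: $M$ is reductive over~$\real$, and $T$ is an $\real$-split torus that centralizes~$M$. So the only point that requires verification is that $M$ has no nontrivial $\real$-characters.

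To check this, let $\chi \colon M \to \mathrm{GL}_1$ be any $\real$-character. Since $[M,M]$ is semisimple, $\chi$ must be trivial on $[M,M]$. Because $M$ is connected and reductive, we have $M = Z(M)^\circ \cdot [M,M]$ as an almost direct product, so it suffices to show that $\chi$ is trivial on $Z(M)^\circ$. But the real points of $Z(M)^\circ$ are compact by hypothesis, so $\chi \bigl( Z(M)^\circ \bigr)$ is a compact subgroup of $\real^\times$, hence is contained in $\{\pm 1\}$; connectedness of $Z(M)^\circ$ then forces $\chi$ to be trivial on $Z(M)^\circ$, and therefore on all of~$M$.

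With this verified, \cref{SplitPerpNoChar} immediately tells us that the Lie algebra of~$M$ is orthogonal to the Lie algebra of~$T$ with respect to the Killing form, which is exactly the asserted orthogonality $M \perp T$. The only substantive step is the character computation above, and since it reduces quickly to the fact that a compact connected subgroup of $\real^\times$ is trivial, I do not anticipate any genuine obstacle.
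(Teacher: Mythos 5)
Your proposal is correct and matches the paper's approach: \cref{MPerpA} is stated in the paper as an immediate corollary of \cref{SplitPerpNoChar} with $Q = \real$ and $H = M$, and the only thing left unsaid there is exactly the verification you supply, namely that a connected reductive group with compact center has no nontrivial $\real$-characters (trivial on the semisimple part $[M,M]$, and trivial on the connected compact center since a connected compact subgroup of $\real^\times$ is trivial).
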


\begin{lem} \label{ANinP}
Let $G = KAN$ be an Iwasawa decomposition of~$G$.
If $P$ is a parabolic subgroup of~$G$, and $N \subset P$, then $A \subset P$.
\end{lem}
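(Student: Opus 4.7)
The plan is to prove the stronger statement that $P$ contains the entire minimal parabolic $P_0 = MAN$, which immediately yields $A \subseteq P$. The strategy uses that $N$ is a maximal unipotent subgroup not only of~$G$ but also of~$P$, together with the identification of minimal parabolic subgroups of~$G$ by their unipotent radicals.

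First I would observe that $N$ is a maximal unipotent subgroup of~$G$: its Lie algebra $\bigoplus_{\alpha \in \Phi^+} \mathfrak{g}_\alpha$ realizes the largest dimension attainable by any unipotent subalgebra of~$\mathfrak{g}$. Since every unipotent subgroup of~$P$ is a unipotent subgroup of~$G$, this same dimension bounds unipotent subgroups of~$P$; hence $N$, sitting inside~$P$ and realizing this maximum, is a maximal unipotent subgroup of~$P$ as well.

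Next, using the real Langlands decomposition $P = M_P T_P U_P$ from the preceding proposition, parabolic subgroups of~$G$ contained in~$P$ correspond bijectively, via the projection $P \to L_P := M_P T_P$, to parabolic subgroups of the reductive Levi~$L_P$, with minimal parabolics matching minimal parabolics. The image of~$N$ in~$L_P$ is a maximal unipotent subgroup of~$L_P$ and so is the unipotent radical of some minimal parabolic of~$L_P$; pulling that parabolic back through $P \to L_P$ (and adjoining $U_P$) yields a minimal parabolic subgroup~$Q$ of~$G$ with $Q \subseteq P$ and unipotent radical equal to~$N$.

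Finally, because $Q$ normalizes its own unipotent radical, $Q \subseteq N_G(N) = MAN = P_0$; since $Q$ and~$P_0$ are both minimal parabolic subgroups of~$G$ they have the same dimension, so $Q = P_0$. Therefore $P_0 = Q \subseteq P$, and in particular $A \subseteq P$, as required. The main technical point to verify carefully is the correspondence used in the middle step (transferring Iwasawa-type structure from the reductive Levi $L_P$ back up to~$P$); once it is in hand, the conclusion follows from the standard identity $N_G(N) = P_0$ together with a dimension comparison.
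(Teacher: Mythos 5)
Your proof is correct, and it proves the same stronger statement the paper proves (that the full minimal parabolic $MAN = N_G(N)$ lies in $P$), but you take a different route to get there. The paper's argument is shorter: it observes that $N \cdot \unip P$ is unipotent (since $\unip P \trianglelefteq P$), so maximality of $N$ forces $\unip P \subseteq N = \unip\bigl(N_G(N)\bigr)$, and then it invokes a ready-made lemma (Tomanov--Weiss, Prop.~5.3) stating that for parabolics $P$ and $Q$, $\unip P \subseteq \unip Q$ implies $Q \subseteq P$. You instead rederive the relevant case of that lemma by hand: you use the Levi projection $P \to L_P$ to manufacture a minimal parabolic $Q \subseteq P$ with $\unip Q = N$, and then compare it with $N_G(N)$ by minimality. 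The trade-off is that the paper's version is self-contained modulo one citation, while yours avoids the citation at the cost of carrying more structure theory on its face. One step you assert but do not fully justify is that the image of $N$ in $L_P$ is a \emph{maximal} unipotent subgroup of $L_P$: this requires knowing $U_P \subseteq N$ (or an equivalent dimension count), which is exactly the paper's first observation via $N \cdot U_P$ being unipotent; it is worth making that explicit. Also, your final dimension comparison is unnecessary: once $Q \subseteq N_G(N)$ and both are minimal parabolics, equality is immediate from minimality. With those small touches the argument is complete.
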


\begin{proof}
Let $Q = N_G(N)$ be the normalizer of~$N$, so $Q$ is a (minimal) parabolic subgroup of~$G$, such that $A \subset Q$ and $\unip Q = N$. Since $N \subset P$ and $\unip P$ is normal in~$P$, we know that $N \cdot \unip P$ is a unipotent subgroup. Since $N$ is a maximal unipotent subgroup of~$G$, this implies $\unip P \subseteq N$. In other words, $\unip P \subseteq \unip Q$. Since $P$ and~$Q$ are parabolic subgroups, this implies $Q \subseteq P$ (cf.\ \cite[Prop.~5.3]{TomanovWeiss-ToriAct}). 
So $A \subset Q \subseteq P$.
\end{proof}

\begin{lem} \label{AFixesXi}
Let $A$~be a maximal $\real$-split torus of~$G$, $\xi$ be a point on the visual boundary of $K \backslash G$, and $x \in K \backslash G$. If $A$ fixes~$\xi$, and $xA$~is a\/ \textup(maximal\textup) flat in $K \backslash G$, then $\xi$~is on the boundary of~$xA$.
\end{lem}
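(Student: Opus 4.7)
The plan is to find a second maximal $\real$-split torus $A'$ such that $\xi \in \bdry(xA')$, then use conjugacy of maximal $\real$-split tori inside the parabolic stabilizer $P = \mathrm{Stab}_G(\xi)$ to transport this conclusion back to $xA$ via parallelism of flats.

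To produce $A'$: the geodesic ray from $x$ to $\xi$ lies in some maximal flat through $x$, and every such flat has the form $xA'$ for a maximal $\real$-split torus $A'$ compatible with $x$. Hence $\xi \in \bdry(xA')$. Because $A'$ acts on the flat $xA'$ by Euclidean translations, it fixes $\bdry(xA')$ pointwise, so $A' \subseteq P$. By hypothesis also $A \subseteq P$, and both are maximal $\real$-split tori in $G$ (hence in $P$), so they are conjugate in the identity component of $P$: pick $p \in P$ with $p^{-1} A' p = A$. This gives $A' p = p A$, so
$$(xA') \cdot p \;=\; x(A'p) \;=\; x(pA) \;=\; (xp)A.$$
Since right multiplication by $p$ is an isometry of $K \backslash G$ that fixes $\xi$, we conclude $\xi \in \bdry\bigl((xp)A\bigr)$.

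To finish, I would check that $xA$ and $(xp)A$ are parallel flats. For any $a_0 \in A$, right multiplication by $a_0^{-1}$ is an isometry that sends $xa_0 \mapsto x$ and preserves $(xp)A$ setwise (since $Aa_0^{-1} = A$), so $d(xa_0, (xp)A) = d(x, (xp)A)$ is independent of $a_0$. Parallel flats in the nonpositively curved space $K \backslash G$ share a visual boundary, yielding $\xi \in \bdry((xp)A) = \bdry(xA)$. The step requiring the most care is the conjugacy of maximal $\real$-split tori inside the parabolic $P$, which is standard algebraic-group theory; the other ingredients (that $\mathrm{Stab}_G(\xi)$ is parabolic, that every geodesic ray extends to a maximal flat, and that parallel flats share a visual boundary) are standard features of symmetric spaces of noncompact type.
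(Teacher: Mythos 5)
Your proof is correct and follows essentially the same strategy as the paper's: pick a maximal flat with $\xi$ on its boundary, observe its torus lies in the parabolic stabilizer $P$ of $\xi$, conjugate that torus to $A$ inside $P$, and transport the boundary point accordingly. The only difference is the last step, where the paper invokes uniqueness of the maximal flat on which $A$ acts by translations to conclude $(xp)A = xA$ outright, while you instead show $xA$ and $(xp)A$ are parallel and hence share visual boundary --- a slightly weaker but equally valid conclusion.
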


\begin{proof}
Let $P = \{\, g \in G \mid \xi \, g = \xi \,\}$, and choose a maximal flat $x_1 A_1$, such that $\xi$~is on the boundary of~$x_1 A_1$. 
Since $A_1$ is abelian, it is clear that $A_1$ fixes~$\xi$, so $A_1 \subseteq P$. Also, since $P$ is a parabolic subgroup (cf.\ the start of the proof of \fullcref{HoroSubgrp}{subgrp}), it is Zariski closed, so any two maximal $\real$-split tori in~$P$ are conjugate. Hence, there is some $g \in P$, such that $A_1^g = A$. Then $\xi = \xi g$ is on the boundary of the flat $x_1 A_1 g = x_1 g A_1^g = x_1 g A$. The uniqueness of the flat fixed by~$A$ implies this flat is $xA$.
\end{proof}

\subsection{Unipotent dynamics}

\begin{thm}[Dani {\cite[Thm.~A and Prop.~1.1(ii)]{Dani-OrbitHoro}}] \label{ClosureIsHomog}
If 
\noprelistbreak
	\begin{itemize}
	\item $N$ is a maximal unipotent subgroup of a connected, semisimple Lie group~$G$, 
	and
	\item $\Gamma$ is a lattice in~$G$,
	\end{itemize}
then there is a closed, connected subgroup~$H$ of~$G$, such that
\noprelistbreak
	\begin{enumerate}
	\item $\closure{N \Gamma} = H \Gamma$,
	\item $H \cap \Gamma$ is a lattice in~$H$,
	\item $N  \subseteq H$,
	and
	\item $N$ acts ergodically on $H \Gamma$, with respect to the $H$-invariant probability measure.
	\end{enumerate}
\end{thm}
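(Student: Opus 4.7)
The plan is to assemble three standard ingredients from unipotent dynamics: non-divergence of unipotent flows, the Mautner phenomenon along a Cartan subgroup normalizing~$N$, and a classification of invariant measures that exploits the hypothesis that $N$ is a \emph{maximal} unipotent subgroup. I would first invoke the Margulis--Dani non-divergence theorem, which guarantees that $N$-orbits on $G/\Gamma$ do not escape to infinity (a definite fraction of each orbit remains in a fixed compact set). Applying Krylov--Bogolyubov to the amenable group~$N$ then produces an $N$-invariant probability measure~$\mu$ supported on $\overline{N\Gamma}/\Gamma$, and after ergodic decomposition we may assume $\mu$ is $N$-ergodic.

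Next, I would define the closed subgroup
$$H = \{\, g \in G \mid g \cdot \overline{N\Gamma} = \overline{N\Gamma}\,\} ,$$
which manifestly contains~$N$ (because $N \cdot \overline{N\Gamma} \subseteq \overline{N\Gamma}$ by continuity, and equality then follows from closedness). The heart of the proof is to show that $\overline{N\Gamma}/\Gamma$ is a single $H$-orbit, or equivalently that $\mu$ is the $H$-invariant probability measure on a closed $H$-orbit. For this I would apply the Mautner phenomenon with respect to a maximal $\real$-split torus~$A$ normalizing~$N$: choosing $a \in A$ in the interior of the Weyl chamber that contracts~$N$, any element in the stable horospherical direction of~$a$ that fixes a translate of~$\mu$ must lie in its stabilizer. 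Iterating this over the full normalizer structure $MAN$ of~$N$ (a minimal parabolic) forces the stabilizer of~$\mu$ to act transitively on $\mathrm{supp}(\mu)$; the maximality of~$N$ is what ensures that no room is left for a ``smaller'' invariant measure supported on a proper intermediate orbit.

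The remaining conclusions then follow fairly formally. The $H$-invariant probability measure on $H\Gamma/\Gamma$ pushes down to a finite $H$-invariant measure on $H/(H \cap \Gamma)$, so $H \cap \Gamma$ is a lattice in~$H$. Ergodicity of~$N$ on $H\Gamma$ is built into the construction via ergodic decomposition: any proper $N$-invariant closed subset of $H\Gamma/\Gamma$ would yield a strictly smaller effective~$H$, contradicting the definition.

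The main obstacle is the transitivity step in the second paragraph. A contemporary proof can short-circuit the argument by quoting Ratner's measure-classification theorem, but Dani's proof predates those results and instead relies crucially on the fact that the normalizer of a maximal unipotent~$N$ is a minimal parabolic $MAN$, so that the Mautner phenomenon applied along~$A$ automatically generates a large enough stabilizer. Controlling how translates of~$\mu$ behave under the contracting $a$-action --- and in particular showing that generic translates converge to a genuinely $H$-invariant measure rather than to a singular one --- is the technically delicate part that I expect to demand the most care.
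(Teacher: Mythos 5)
The paper does not prove this theorem: it is imported as a black box from Dani's 1986 Duke paper (Theorem~A and Proposition~1.1(ii)), which in turn rests on the measure classification from his 1981 \emph{Invent.\ Math.}\ paper. So there is no in-paper argument to compare your sketch against --- the relevant comparison is with Dani's published proof.

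Your sketch names the right ingredients (Dani--Margulis nondivergence, the Mautner phenomenon along the $A$-action, and the fact that $N_G(N)$ is a minimal parabolic $MAN$), but it has a genuine gap at exactly the step you flag as the heart of the matter. Knowing that every $N$-ergodic invariant probability measure on $G/\Gamma$ is algebraic --- which is what the Mautner discussion is aiming at, and is essentially Dani's 1981 theorem --- does \emph{not} immediately yield the orbit-closure statement, and the two are not ``equivalent'' as you assert. The ergodic measure $\mu$ you produce via Krylov--Bogolyubov and ergodic decomposition lives on \emph{some} closed $N$-invariant subset of $\overline{N\Gamma}/\Gamma$, but nothing forces its support to contain the orbit $N\Gamma$ itself, let alone to exhaust the closure; passing from the measure statement to $\overline{N\Gamma}=H\Gamma$ is precisely the new content of the 1986 paper, where Dani uses the expanding $A$-action together with minimality and recurrence to saturate the closure. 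A separate circularity appears in your final paragraph: you deduce ergodicity of $N$ on $H\Gamma$ from the absence of proper closed $N$-invariant subsets, but that is minimality, not ergodicity. Ergodicity concerns measurable invariant sets and, in Dani's framework, is established by a Mautner/Howe--Moore argument on $H/(H\cap\Gamma)$, not as a formal consequence of the orbit-closure classification. Your definition of $H$ as the setwise stabilizer of $\overline{N\Gamma}$ is also not obviously the same group that Dani extracts from the measure classification; one would still need to show that this stabilizer is connected and acts transitively on $\overline{N\Gamma}/\Gamma$, which is again the missing transitivity step.
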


We can describe the subgroup~$H$ quite explicitly if the lattice~$\Gamma$ is arithmetic:

\begin{cor}[cf.\ {\cite[Prop~6.1]{Dani-MinHoroFlows}}] \label{ClosureIsPorbit}
Suppose 
\noprelistbreak
	\begin{itemize}
	\item $G = \GG_{\real}^\circ$, where $\GG$ is a connected, semisimple algebraic group over~$\rational$,
	\item $\Gamma$ is a subgroup of finite index in~$\GG_{\integer}$,
	and
	\item $N$ is a maximal unipotent subgroup of~$G$.
	\end{itemize}
Then there is a parabolic $\rational$-subgroup $P$ of~$G$, with real Langlands decomposition $P = MTU$, and a connected, closed, normal subgroup $M^*$ of~$M$, such that 
\noprelistbreak
	\begin{itemize}
	\item $\closure{N \Gamma} = M^* U \Gamma$, 
	and
	\item $N\subseteq M^* U$.
	\end{itemize}
\end{cor}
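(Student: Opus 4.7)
The plan is to apply Theorem~\ref{ClosureIsHomog} and then use arithmeticity to identify the resulting subgroup $H$ with one of the form $M^*U$, where $P = MTU$ is a parabolic $\rational$-subgroup of~$\GG$.

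First I would invoke Theorem~\ref{ClosureIsHomog}: since $\Gamma$ has finite index in $\GG_\integer$, it is a lattice in~$G$ (Borel--Harish-Chandra), so there is a closed connected $H \subseteq G$ with $N \subseteq H$, $\closure{N\Gamma} = H\Gamma$, and $H \cap \Gamma$ a lattice in~$H$. The corollary amounts to the claim that this abstract $H$ coincides with $M^* U$ for a suitable $\rational$-parabolic and connected normal~$M^* \triangleleft M$.

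Next I would show $H$ is defined over~$\rational$. Since $H$ admits a lattice, it has no nontrivial $\rational$-characters, so Borel density gives that $H \cap \Gamma$ (which is contained in $\GG_\rational$) is Zariski dense in~$H$; its Zariski closure is therefore a $\rational$-subgroup $\mathbf{H}$ of $\GG$ with $\mathbf{H}^\circ_\real = H$. Let $\mathbf{U}$ be the unipotent radical of~$\mathbf{H}$. Because $\mathbf{U}$ is normal in~$H$ and $N \subseteq H$ is unipotent, $N \cdot \mathbf{U}_\real$ is a unipotent subgroup of~$G$; maximality of~$N$ then yields $\mathbf{U}_\real \subseteq N$. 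Setting $P := N_\GG(\mathbf{U})$, standard parabolic structure theory identifies $P$ as a parabolic $\rational$-subgroup of $\GG$ whose unipotent radical is exactly~$\mathbf{U}$. Take the real Langlands decomposition $P = MTU$ with $U = \mathbf{U}_\real^\circ$; then $\mathbf{H} \subseteq P$.

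Finally I would extract~$M^*$. Triviality of the $\rational$-character group of~$H$ forces the projection of~$H$ to the central $\rational$-split torus $T \cong P/MU$ to be trivial, so $H \subseteq MU$ and hence $H = (H \cap M)^\circ \cdot U =: M^* \cdot U$. The inclusion $N \subseteq H = M^* U$ is then automatic. Normality $M^* \triangleleft M$ should follow from $T$-invariance of $H$ together with the fact that $M$ centralizes~$T$, which forces the decomposition $H = M^* U$ to be $M$-conjugation invariant. I expect this normality assertion to be the main technical obstacle; the cleanest route is to follow \cite[Prop.~6.1]{Dani-MinHoroFlows} directly, which handles it via the explicit structure of closed connected subgroups of~$G$ that contain~$N$ and admit closed $\Gamma$-orbits in the arithmetic setting.
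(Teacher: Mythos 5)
The paper does not actually prove this corollary—it cites Dani's Proposition~6.1 (which is precisely the argument you are reconstructing), so there is no in-paper proof to compare against. Your high-level strategy is the right one and matches Dani's: use \cref{ClosureIsHomog} to produce $H$ with $\closure{N\Gamma}=H\Gamma$ and $H\cap\Gamma$ a lattice, then use arithmeticity and the structure theory of $\rational$-groups to identify $H$ as $M^*U$ for a parabolic $\rational$-subgroup $P=MTU$. However, the structural part of your sketch has three concrete gaps, and they are not merely details to flesh out.

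First, ``Borel density gives $H\cap\Gamma$ is Zariski dense in $H$'' is not the standard Borel density theorem, which applies to lattices in semisimple groups without compact factors; $H$ here is an arbitrary closed connected subgroup with a lattice. One needs a Dani--Margulis type density argument (exploiting that the finite $H$-invariant measure on $H\Gamma$ is $N$-ergodic and $N$ is generated by $\mathrm{Ad}$-unipotent one-parameter subgroups), not a one-line appeal. Second, and this is the main gap, the assertion that $P:=N_\GG(\mathbf{U})$ is a parabolic $\rational$-subgroup with $R_u(P)=\mathbf{U}$ does not follow from ``standard parabolic structure theory.'' For a unipotent $\rational$-subgroup $\mathbf{V}$, Borel--Tits gives a canonical parabolic $\PP(\mathbf{V})$ with $N_\GG(\mathbf{V})\subseteq\PP(\mathbf{V})$ and $\mathbf{V}\subseteq R_u(\PP(\mathbf{V}))$, but $N_\GG(\mathbf{V})$ is itself parabolic only if $\mathbf{V}$ already \emph{is} the unipotent radical of a parabolic, and the inclusion $\mathbf{V}\subseteq R_u(\PP(\mathbf{V}))$ can be strict. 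Showing that $\mathbf{U}=R_u(\mathbf{H})$ actually equals $R_u(\PP(\mathbf{U}))$ is where $N\subseteq H$, the maximality of $N$, and the triviality of the $\rational$-character group of $\mathbf{H}$ must all be used; it is a substantive step in Dani's argument. Third, your proposed route to $M^*\triangleleft M$ is not a valid inference: ``$T$ normalizes $H$ and $M$ centralizes $T$'' says nothing about whether $M$ normalizes $H$, so it cannot force the decomposition $H=M^*U$ to be $M$-conjugation invariant. (The paper's own remark after the corollary---that $M^*$ contains all noncompact simple factors of $M$ but may miss compact ones---is a hint that normality here comes from the ergodic and arithmetic data rather than from centralizer bookkeeping.) You are right that the cleanest route is to follow Dani's Prop.~6.1 directly; as written, the proposal contains steps that need to be replaced, not just elaborated.
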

\begin{rem}
Since $M^* U$ contains the maximal unipotent subgroup~$N$, we know that $M^*$ contains all of the noncompact, simple factors of~$M$. However, it may be missing some of the compact factors.
\end{rem}

\begin{rem}
\Cref{ClosureIsHomog} has been vastly generalized by M.\,Ratner \cite[Thm.~A and Cor.~A]{Ratner-Equidist}.
\end{rem}

\section{Boundary points of a \texorpdfstring{$\rational$}{Q}-split flat are not horospherical} \label{OnFlatNotHoroSect}

\begin{prop}[Hattori {\cite[Thm.~A or Prop.~4.4]{Hattori-GeomLimSets}}] \label{OnFlatNotHoro}
Let
\noprelistbreak
	\begin{itemize}
	\item $G = \GG(\real)^\circ$, where\/ $\GG$ is a connected, semisimple\/ $\rational$-group,
	\item $X = K \backslash G$ be the corresponding symmetric space of noncompact type, with the Killing-form metric,
	\item $S = \SS(\real)^\circ$, where\/ $\SS$ is a maximal\/ $\rational$-split torus of\/~$\GG$,
	\item $x \in X$, such that $x S$ is a\/  \textup($\rational$-split\/\textup) flat in~$X$,
	and
	\item $\{a^t\}$ be a one-parameter subgroup of~$S$.
	\end{itemize}
Then the endpoint of the geodesic ray\/ $\{x a^t\}_{t=0}^\infty$ is not a horospherical limit point for $\GG(\integer)$.
\end{prop}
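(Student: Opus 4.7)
The plan is to invoke \fullCref{HoroSubgrp}{limit}, which reduces the claim to showing
\[
  \lim_{t \to \infty}\sup_{g \in a^t A_\perp N} \inf_{\gamma \in \Gamma \setminus \{e\}} \|g\gamma g^{-1} - e\| = 0.
\]
I will produce, for each $g = a^t a_\perp n$, a nontrivial $\gamma \in \Gamma$ concentrated in a rational unipotent root subgroup on which $\mathrm{Ad}(g)$ contracts exponentially. Since $\{a^t\} \subset S$ is defined over $\rational$, the stabilizer of $\xi$ in $\GG$ is a parabolic $\rational$-subgroup; let $\UU$ be its unipotent radical and $U = \UU(\real)^\circ$. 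Each $\rational$-root space of $S$ on $\mathfrak{u}$ carries a full lattice of integer points, yielding nontrivial elements of $\Gamma$ in each $\rational$-root subgroup. Moreover $a^t A_\perp N \subset U \cdot (\text{Levi of the parabolic})$ in $\GG(\real)^\circ$, because $A$ centralizes $\{a^t\}$ (so fixes $\xi$, cf.\ \Cref{AFixesXi}) and $N$ fixes every horosphere based at $\xi$ (by the proof of \fullCref{HoroSubgrp}{subgrp}); thus conjugation by $g$ preserves $U$.

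Write $\{a^t\} = \exp(tH)$ with $H$ in the Lie algebra $\mathfrak{s}$ of $S$, and let $\alpha_1, \dots, \alpha_r$ be the simple $\rational$-roots compatible with $\{a^t\}$ lying in the closed $\rational$-chamber. I apply \Cref{InSpanMustLarge} with $v$ the Killing-form dual of $H$ in $\mathfrak{s}$ and $v_i$ the Killing-form duals of $-\alpha_i$. The hypotheses hold: the simple $\rational$-roots span $\mathfrak{s}^*$, so $v \in \mathrm{span}(v_i)$; $\langle v, v_i\rangle = -\alpha_i(H) \geq 0$ because $\{a^t\}$ lies in the closed chamber (in the sign convention of \Cref{HoroSubgrp} where $\mathfrak{n}$ is contracted by $\mathrm{Ad}(a^t)$); and $\langle v_i, v_j\rangle \leq 0$ for $i \neq j$ is the standard obtuse-angle property of simple roots. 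The conclusion: for every $T > 0$ there is $t_0$ such that for $t \geq t_0$ and every $H_\perp \perp H$ in $\mathfrak{a}$, some simple $\rational$-root $\alpha_i$ (depending on $H_\perp$, $t$) satisfies $\alpha_i(tH + H_\perp) < -T$. For this $\alpha_i$, the element $\gamma$ will be chosen in the corresponding $\rational$-root subgroup's integer points.

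The contraction estimate then comes from the decomposition $\mathfrak{a} = \mathfrak{s} \oplus \mathfrak{a}_M$, the Killing-form orthogonal splitting corresponding to the $\rational$-anisotropic Levi of the minimal $\rational$-parabolic (orthogonality by \Cref{SplitPerpNoChar}, since that Levi has no $\rational$-character): writing $a_\perp = a_S a_M$ accordingly with $\log a_S \perp H$ in $\mathfrak{s}$, the action of $\mathrm{Ad}(a^t a_S)$ on $\mathfrak{g}_{\alpha_i}^\rational$ is scalar multiplication by $e^{\alpha_i(tH + \log a_S)} < e^{-T}$, while $\mathrm{Ad}(a_M)$ is volume-preserving on the same space (since it lies in the anisotropic Levi, which carries no $\rational$-character). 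Because $N$ normalizes $U$, the conjugate $n\gamma n^{-1}$ stays in $U$, with leading term in $\mathfrak{g}_{\alpha_i}^\rational$ and higher-weight terms in $\mathfrak{g}_{\alpha_i + \beta}^\real$ for positive $\rational$-roots $\beta$ on $\mathfrak{u}$; these higher terms inherit contraction because $(\alpha_i + \beta)(H) \leq \alpha_i(H) \leq 0$.

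The main obstacle will be making the estimate uniform in the unbounded parameters $a_M \in A_M$ and $n \in N$. I expect this to follow by choosing $\gamma$ as a Minkowski-short vector in the integer lattice $\mathfrak{g}_{\alpha_i}^\rational(\integer)$ under the norm twisted by $\mathrm{Ad}(a_M n_M)$ (where $n_M$ is the Levi component of $n$): the twist has determinant one, so the covolume is preserved and a nonzero short vector of universally bounded norm exists. Combined with the scalar contraction $e^{-T}$ and the stronger geometric bound $\alpha_i(H_\perp) \leq -\epsilon \|H_\perp\|$ that is implicit in the proof of \Cref{InSpanMustLarge} (needed to absorb the $H_\perp$-contributions in the higher-weight root spaces produced by $n$-conjugation), this should give $\|g\gamma g^{-1} - e\| \leq C e^{-T}$ with $C$ independent of $(t, a_\perp, n)$, establishing the required limit.
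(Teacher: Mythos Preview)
Your overall architecture matches the paper's: both invoke \fullcref{HoroSubgrp}{limit}, both apply \cref{InSpanMustLarge} to the simple $\rational$-roots, and both use \cref{SplitPerpNoChar} to pass from the Killing-orthogonal decomposition of $A$ to the $\rational$-split/anisotropic decomposition of~$\widehat A$. So in the first two-thirds of the argument you are doing exactly what the paper does (modulo sign conventions).

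The divergence, and the gap, is in the final step. The paper does \emph{not} work in a single $\rational$-root space. Instead it takes the whole unipotent radical $N_\alpha$ of the maximal $\rational$-parabolic $P_\alpha$ associated to the selected simple root $\alpha$, and observes that for any $g \in a^t b N$ the conjugation action of $g^{-1}$ contracts Haar measure on $N_\alpha$ by a large factor: the $a^t b$-part contributes the large contraction, and the crucial point is that $N$ normalizes $N_\alpha$ (since $N \subset P_\alpha$) and, being unipotent, acts with determinant~$1$ on $\mathfrak{n}_\alpha$. Thus the $n$-factor contributes \emph{nothing} to the volume change, and the uniformity in $n \in N$ is free. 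Since $(N_\alpha)_{\integer}$ is a cocompact lattice in $N_\alpha$, a small-covolume argument then produces the required short nontrivial $\gamma$.

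Your plan of choosing $\gamma$ by Minkowski inside the single root space $\mathfrak{g}_{\alpha_i}^{\rational}$ runs into exactly the obstacle you flag and do not close. After conjugating by $n_0$ (the $N_0$-part of $n$) you acquire components in higher $\rational$-root spaces $\mathfrak{g}_\beta$ with $\beta = \alpha_i + \sum_j c_j \alpha_j$, $c_j \ge 0$. On such a space the $a^t a_S$-scaling is $e^{\beta(tH + \log a_S)}$, and while $\alpha_i(tH + \log a_S) < -T$, the terms $\alpha_j(\log a_S)$ for $j \neq i$ are completely uncontrolled and can be positive of order $\|{\log a_S}\|$; the bound $\alpha_i(\log a_S) \le -\epsilon\|{\log a_S}\|$ from the proof of \cref{InSpanMustLarge} does not dominate them. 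Combined with the polynomial growth in $n_0$ of those higher components, your estimate $\|g\gamma g^{-1} - e\| \le C e^{-T}$ is not justified. The paper's device --- track only $\det(\mathrm{Ad}(g)|_{\mathfrak{n}_\alpha})$, which ignores how mass is redistributed among root spaces and is blind to the $n$-factor --- is precisely what circumvents this.
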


\begin{proof}
Let:
\noprelistbreak
	\begin{itemize}
	\item $\Phi$ be the system of roots of~$\GG$ with respect to~$\SS$,
	\item $\Delta$ be a base of~$\Phi$, such that $\alpha( a^t) \ge 0$ for all $\alpha \in \Delta$ and all $t > 0$,
	\item $\widehat A$ be a $\rational$-torus in~$G$ that contains some maximal $\real$-split torus~$A$, and also contains~$S$ (such a torus can be constructed by applying \cite[Cor.~3 of \S7.1, p.~405]{PlatonovRapinchukBook} to $C_G(S)$),
	and
	\item $A_\perp$ be the orthogonal complement of~$\{a^t\}$ in~$A$.
	\end{itemize}
For each $\alpha \in \Delta$, let:
\noprelistbreak
	\begin{itemize}
	\item $\alpha^A \in S$, such that $\langle a \mid \alpha^A \rangle = \alpha(a)$ for all $a \in S$,
	and
	\item $P_\alpha = S_\alpha M_\alpha N_\alpha$ be the parabolic $\rational$-subgroup of~$G$ corresponding to~$\alpha$, where
\noprelistbreak
		\begin{itemize}
		\item $S_\alpha$ is the one-dimensional subtorus of~$S$ on which all roots in $\Delta \smallsetminus \{\alpha\}$ are trivial,
		\item $M_\alpha$ is reductive with $\rational$-anisotropic center,
		and
		\item the unipotent radical $N_\alpha$ is generated by the roots in~$\Phi^+$ that are \emph{not} trivial on~$S_\alpha$.
		\end{itemize}
	\end{itemize}
Let $N$ be a maximal unipotent subgroup of~$G$ that is normalized by~$A$ and is contained in the minimal parabolic $\rational$-subgroup $\bigcap_{\alpha \in \Delta} P_\alpha$. (In other words, let $N$ be the unipotent radical of a minimal parabolic $\real$-subgroup of~$G$ that contains~$A$ and is contained in $\bigcap_{\alpha \in \Delta} P_\alpha$.)

Note that:
\noprelistbreak 
	\begin{itemize}
	\item Since $\Delta$ is a basis for the dual of~$S$ (viewed as a vector space), we know that $\{\alpha^A\}_{\alpha \in \Delta}$ spans~$S$. Hence, $\{a^t\}$ is contained in the span of~$\{\alpha^A\}_{\alpha \in \Delta}$.
	\item For $\alpha \in \Delta$ and $t \in \real^+$, we have $\langle a^t \mid \alpha^A \rangle = \alpha(a^t) \ge 0$. 
	\item For $\alpha , \beta \in \Delta$ with $\alpha \neq \beta$, it is a basic property of root systems that $\langle \alpha \mid \beta \rangle \le 0$. Therefore $\langle \alpha^A \mid \beta^A \rangle \le 0$. 
	\end{itemize}
So \cref{InSpanMustLarge} tells us that if $t \in \real^+$ is sufficiently large, then, for all $b \in A_\perp$, there exists $\alpha \in \Delta$, such that $\langle a^t b \mid \alpha^A \rangle$ is large. 

Note that $\alpha$ extends uniquely to a $\rational$-character~$\widehat\alpha$ of~$\widehat A$. Namely, $\widehat\alpha$ must be trivial on the $\rational$-anisotropic part of~$\widehat A$, which is complementary to~$S$. Then, since \cref{SplitPerpNoChar} tells us that the anisotropic part is orthogonal to~$S$, we have $\langle a \mid \alpha^A \rangle = \widehat\alpha(a)$ for all $a \in \widehat A$ (not only for $a \in S$). Hence, the conclusion of the preceding paragraph tells us that $\widehat\alpha(a^t b)$ is large.

Since conjugation by the inverse of $a^t  b$ contracts the Haar measure on $N_\alpha$ by a factor of $\alpha( a^t  b )^k$ for some $k \in \integer^+$, and the action of~$N$ on $N_\alpha$ is volume-preserving, this implies that, for any $g \in a^t b N$, conjugation by the inverse of~$g$ contracts the Haar measure on~$N_\alpha$ by a large factor. Since $(N_\alpha)_{\integer}$ is a cocompact lattice in $N_\alpha$ \cite[Thm.~2.12]{RaghunathanBook}, this implies there is some nontrivial $h \in (N_\alpha)_{\integer}$, such that 
	$\| g h g^{-1} - e \|$ is small.
Therefore, \fullcref{HoroSubgrp}{limit} implies that $\xi$ is not a horospherical limit point for $\GG(\integer)$.
\end{proof}

\section[Non-horospherical limit points are on a \texorpdfstring{$\rational$}{Q}-split flat]{Non-horospherical limit points are on the boundary of a \texorpdfstring{$\rational$}{Q}-split flat} \label{MainSect}

\begin{defn}
Suppose $X/\Gamma$ is a locally symmetric space of noncompact type, and $\xi$ is a point on the visual boundary of~$X$.
We say the horospheres based at~$\xi$ are \emph{uniformly coarsely dense} in $X/\Gamma$ if there exists $C > 0$, such that, for every horosphere~$\mathcal{H}_t$ based at~$\xi$, every point of $X/\Gamma$ is at distance $< C$ from some point in $\pi(\mathcal{H}_t)$, where $\pi \colon X \to X/\Gamma$ is the natural covering map.
\end{defn}

\begin{rem}
Suppose $\Gamma_1 \subset \Gamma_2$. It is obvious that if the horospheres based at~$\xi$ are uniformly coarsely dense in $X/\Gamma_1$, then they are uniformly coarsely dense in $X/\Gamma_2$. \Cref{HoroLimIff} implies that the converse is true if $X/\Gamma_1$ has finite volume.
\end{rem}

\begin{thm} \label{HoroToInfty}
Let
\noprelistbreak
	\begin{itemize}
	\item $G = \GG(\real)^\circ$, where $\GG$ is a connected, semisimple $\rational$-group,
	\item $K \backslash G$ be the corresponding symmetric space of noncompact type with the Killing-form metric,
	\item $\Gamma$ be a subgroup of finite index in $G_{\integer}$,
	and
	\item $\xi$ be a point on the visual boundary of~$K \backslash G$.
	\end{itemize}
If the horospheres based at~$\xi$ are not uniformly coarsely dense in $K \backslash G/ \Gamma$, then there is a parabolic $\rational$-subgroup\/~$\PP$ of\/~$\GG$, such that 
\noprelistbreak
	\begin{enumerate}
	\item \label{HoroToInfty-FixXi}
	$\PP(\real)$ fixes~$\xi$,
	and
	\item \label{HoroToInfty-FixHoro}
	$\PP(\integer)$ fixes some\/ \textup(or, equivalently, every\/\textup) horosphere based at~$\xi$.
	\end{enumerate}
\end{thm}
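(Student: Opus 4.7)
The plan is to translate the hypothesis into a statement about $N$-orbit closures on $G/\Gamma$, feed it into \cref{ClosureIsPorbit} to extract a $\rational$-parabolic, and then verify the two conclusions via the structure theory of parabolics. First I would adopt the coordinates of \cref{HoroSubgrp}: fix a maximal $\real$-split torus $A$ containing $\{a^t\}$ (whose ray to~$\xi$ realizes the endpoint), the complementary subtorus $A_\perp$, and the maximal unipotent $N$, so that the horospheres at~$\xi$ are $\mathcal{H}_t = xa^t A_\perp N$. Then $\pi(\mathcal{H}_t)$ is $C$-dense in $X/\Gamma$ iff $G = K \cdot a^t A_\perp N \cdot B_C \cdot \Gamma$ (with $B_C$ the $C$-ball around~$e$), so negating uniform coarse density produces sequences $t_n \in \real$ and $g_n \in G$ whose cosets $g_n\Gamma$ lie at distance at least~$n$ from $a^{t_n} A_\perp N\Gamma$ in $G/\Gamma$; after translating by $a^{-t_n}$ this yields a controlled failure-of-density statement for the $N$-orbits on $G/\Gamma$.

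Next I would apply \cref{ClosureIsPorbit} to the action of $N$ on $G/\Gamma$, producing a parabolic $\rational$-subgroup $P = MTU$ and a closed connected normal subgroup $M^*$ of $M$ with $\overline{N\Gamma} = M^* U \Gamma$ and $N \subseteq M^*U$. An equidistribution argument should force $P$ to be a \emph{proper} parabolic: if $P = G$ then $N\Gamma$ would be dense in $G$, and combined with the bounded geometric distortion of $a^t A_\perp$-translation inside a Siegel set this would yield a uniform coarse-density constant, contradicting the hypothesis. I then set $\PP = P$.

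For conclusion~\pref{HoroToInfty-FixXi}: since $N \subseteq P$, \cref{ANinP} gives $A \subseteq P$, so in particular $\{a^t\} \subseteq P$. The structural fact that $M^*$ contains all noncompact simple factors of $M$, combined with $\{a^t\}$ normalizing $N \subseteq M^*U$, pins $\{a^t\}$ into the central split torus $T$ of $P$'s Levi decomposition. Hence $P \subseteq C_G(\{a^t\}) \cdot N$, and the latter is the stabilizer $P_\xi$ of~$\xi$ (cf.\ the proof of \fullcref{HoroSubgrp}{subgrp}), so $\PP(\real)$ fixes~$\xi$. For conclusion~\pref{HoroToInfty-FixHoro}: the action of $P_\xi$ on the real-parametrized family of horospheres at~$\xi$ is via a Busemann character $\chi_\xi \colon P_\xi \to \real$, whose restriction to $\PP$ is a $\rational$-character. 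But every $\rational$-character of a $\rational$-parabolic factors through the $\rational$-split central torus of its Levi, whose integer points are finite; so $\chi_\xi$ kills $\PP(\integer)$, and every element of $\PP(\integer)$ therefore fixes every horosphere based at~$\xi$.

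The main obstacle will be the alignment step in the third paragraph, namely verifying that $\{a^t\}$ actually lies in the central split torus $T$ of $P$'s Levi decomposition, equivalently that Dani's parabolic $P$ is contained in $P_\xi$. \cref{ClosureIsPorbit} is produced from pure unipotent dynamics and does not ``see'' $\xi$, so the bridge must be extracted from a careful analysis of the failure sequences $(t_n, g_n)$, showing that the direction $\{a^t\}$ along which horospheres fail to be uniformly dense lines up with the ``escape direction'' of the $N$-orbits, and this in turn matches the split torus of the parabolic $P$.
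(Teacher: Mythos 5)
Your overall scaffolding matches the paper: set up $A$, $\{a^t\}$, $A_\perp$, $N$ so that $\mathcal{H}_t = xa^t A_\perp N$, invoke \cref{ClosureIsPorbit} to get $P = MTU$ and $M^*$ with $\closure{N\Gamma} = M^* U \Gamma$, and aim to show $\{a^t\} \subseteq T$ so that $P \subseteq C_G(\{a^t\})N$ fixes~$\xi$. But both of your closing steps have real gaps.

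For~\pref{HoroToInfty-FixXi}, the reasoning you give --- ``$M^*$ contains all noncompact simple factors of~$M$, combined with $\{a^t\}$ normalizing $N \subseteq M^*U$, pins $\{a^t\}$ into~$T$'' --- does not work: every element of $A = A_M T$ (where $A_M := A \cap M$) normalizes~$N$, so this says nothing to exclude $\{a^t\}$ having a component in~$A_M$. You correctly flag this as the main obstacle, but the resolution you sketch (analyzing escape sequences $(t_n,g_n)$) is not what is needed. The paper closes the gap with an orthogonality argument that is essentially linear algebra: after arranging $A \subseteq M^*T$, non-density of some $\pi(\mathcal{H}_t)$ forces $T \not\subseteq A_\perp M^*$, which (since $A_\perp$ has codimension one in~$A$) forces $A_M \subseteq A_\perp$, i.e.\ $A_M \perp \{a^t\}$; and \cref{SplitPerpNoChar} gives $M \perp T$, so $T$ is exactly the orthogonal complement of $A_M$ in~$A$, hence $\{a^t\} \subseteq T$. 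No sequence analysis is involved.

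For~\pref{HoroToInfty-FixHoro}, your argument asserts that the Busemann character $\chi_\xi$ restricts to a $\rational$-character of~$\PP$. This is not justified, and is false in general: we only know $\{a^t\} \subseteq T$, and $T$ may well have central factors that are $\real$-split but $\rational$-anisotropic (e.g.\ when $\GG = \mathrm{Res}_{K/\rational}\SL_2$ for a real quadratic field~$K$); for such a~$\xi$ the direction of~$a^t$ is not rational, $\chi_\xi$ is not (the log of) a $\rational$-character, and $\PP(\integer)$ need not lie in $\ker\chi_\xi$. In fact, conclusion~\pref{HoroToInfty-FixHoro} is simply false without the hypothesis, so any proof that never re-invokes the hypothesis must be wrong. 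The paper's argument is shorter and does use the hypothesis: if $\gamma \in \PP(\integer)$ translates horospheres by $\ell \neq 0$, then a power of~$\gamma$ lies in~$\Gamma$, and $\mathcal{H}_t \cdot \langle\gamma\rangle = \bigcup_{n} \mathcal{H}_{t+n\ell}$ is $\ell$-coarsely dense for every~$t$, directly contradicting the failure of uniform coarse density.
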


\begin{proof}
Fix any $x  \in K \backslash G$.
Choose 
\noprelistbreak
	\begin{itemize}
	\item a maximal (connected) $\real$-split torus~$A$ of~$G$,
	and
	\item  a one-parameter subgroup $\{a^t\}$ of~$A$,
	\end{itemize}
such that
\noprelistbreak
	\begin{itemize}
	\item $xA$ is a (maximal) flat in $K \backslash G$,
	and
	\item $\xi$ is the endpoint of the geodesic ray $\{ x a^t\}_{t=0}^\infty$.
	\end{itemize}
Let	
\noprelistbreak
	\begin{itemize}
	\item $A^+$ be a Weyl chamber of~$A$ that contains $\{ a^t\}_{t=0}^\infty$,
	and
	\item $N = \left\{\, u \in G \mathrel{\Big|}
	\begin{matrix} \text{for all $a$ in the interior of~$A^+$,} \\
	\text{we have $a^k u a^{-k} \to e$ as $k \to +\infty$} \end{matrix}
	 \,\right\}$.
	\end{itemize}
Note that $G = KAN$ is an Iwasawa decomposition of~$G$.

Let $P = MTU$ and~$M^*$ be as in \cref{ClosureIsPorbit}. Denote by $A_\perp$ the orthogonal complement of  $\{a^t\}$ in~$A$ (with respect to the Killing form), so $A_\perp$ is a (codimension-one) connected subgroup of~$A$. Since $N \subseteq P$ (and $P$ is parabolic), we have $A \subset P$ (see \cref{ANinP}). Therefore, since all maximal $\real$-split tori of~$P$ are conjugate \cite[Thm.~20.9(ii), p.~228]{Borel-LinAlgGrps}, and $M^*T$ contains a maximal $\real$-split torus, there is no harm in assuming $A\subseteq M^*T$, by replacing $M^*T$ with a conjugate.

\fullCref{HoroSubgrp}{subgrp} tells us that the horosphere based at~$\xi$ through the point $x a^t$ is 
	$$\mathcal{H}_t = xa^t  A_\perp \, N .$$
(Note that $N$ preserves the horosphere and thus also the point $\xi$, so the proof of \pref{HoroToInfty-FixXi} will be complete when we show that the Levi subgroup $MT$ also preserves~$\xi$.)
We have 
	\begin{align*}
	\closure{ a^t  A_\perp \, N \Gamma }
	 \supseteq a^t A_\perp \cdot \closure{ N \Gamma}
	 = a^t A_\perp \cdot M^* \, U \, \Gamma
	. \end{align*}
By assumption, there is some~$t$, such that $\pi(\mathcal{H}_t)$ is not dense.
Assuming, as we may, that $K$ is the stabilizer of~$x$, this implies $K \cdot a^t A_\perp \cdot M^*U \neq G$.
Since $M^* T U \supseteq AN$ and $KAN = G$, we conclude that $T \not\subseteq A_\perp \, M^*$. (Note that this implies $P \neq G$.)

Let $A_M = A \cap M = A \cap M^*$, so $A = A_M T$. Then, since $T \not\subseteq A_\perp \, M^*$, we must have $A_\perp \, A_M \neq A$. Since $A_\perp$ has codimension one in~$A$, this implies $A_M \subseteq A_\perp$, which means $A_M \perp \{a^t\}$. On the other hand, \cref{SplitPerpNoChar} tells us $M \perp T$, which implies that $T$ is the orthogonal complement of~$A_M$ in~$A$. Therefore $\{a^t\} \subseteq T$, so $C_G(T) \subseteq C_G \bigl(\{a^t\}\bigr)$. Hence
	\begin{align*}
	P = MTU = C_G(T) \, U \subseteq C_G \bigl(\{a^t\}\bigr) \, N.
	\end{align*}
Since $C_G \bigl(\{a^t\}\bigr)$ and~$N$ each preserve the point $\xi$ at infinity, we conclude that the parabolic $\rational$-subgroup $P$ preserves the point $\xi$ at infinity. 
This completes the proof of \pref{HoroToInfty-FixXi}. 

Now, we turn to the proof of \pref{HoroToInfty-FixHoro}. Fixing a basepoint in~$K \backslash G$ yields a natural parametrization $\mathcal{H}_t$ of the horospheres based at~$\xi$. If $g$ is any isometry of $K \backslash G$ that fixes~$\xi$, then there is some $\ell = \ell(g)$, such that $\mathcal{H}_t g = \mathcal{H}_{t + \ell}$ for all~$t$. Thus, an isometry that fixes one of these horospheres must fix all of them.

Suppose there is some element $\gamma$ of~$\PP(\integer)$ with $\ell(\gamma) \neq 0$. (This will lead to a contradiction.) By replacing~$\gamma$ with a power of itself, we may assume $\gamma \in \Gamma$ (since $\ell(\gamma^n) = n \cdot \ell(\gamma) \neq 0$ for all $n \in \integer^+$). Then, for any $t \in \real$, we have
	$$ \mathcal{H}_t \cdot \Gamma \supset \mathcal{H}_t \cdot \langle \gamma \rangle = \bigcup_{n \in \integer} \mathcal{H}_{t + n \, \ell(\gamma)} .$$
Since every point in $K \backslash G$ is on some horosphere, this implies that every point is at distance less than $\ell(\gamma)$ from $\mathcal{H}_t \cdot \Gamma$. Therefore,  the horospheres based at~$\xi$ are uniformly coarsely dense in $K \backslash G / \Gamma$ (since $\ell(\gamma)$ is a constant, independent of~$t$). This is a contradiction.
\end{proof}

\begin{prop} \label{OnQsplit}
Assume the notation of \cref{HoroToInfty}.
If there is a parabolic\/ $\rational$-subgroup\/~$\PP$ of\/~$\GG$, such that	
\noprelistbreak
	\begin{itemize}
	\item $\PP(\real)$ fixes ~$\xi$, 
	and
	\item $\PP(\integer)$ fixes every horosphere based at~$\xi$, 
	\end{itemize}
then $\xi$ is on the boundary of a\/ $\rational$-split flat.
\end{prop}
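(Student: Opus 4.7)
The plan is to find the direction of the ray to~$\xi$ inside the Lie algebra of a $\rational$-split torus, using the hypothesis that the horospheres are fixed by $\PP(\integer)$. Let $P = \PP(\real)^\circ$, with real Langlands decomposition $P = MTU$, and let $\SS$ and $\TT$ be the maximal $\rational$-split and maximal $\rational$-anisotropic subtori (respectively) of the center of the $\rational$-Levi of~$\PP$, so that the center of the $\rational$-Levi is the almost-direct product $\SS \cdot \TT$. Since $P$ fixes~$\xi$, there is a continuous homomorphism $\lambda : P \to \real$ measuring the signed shift of each horosphere based at~$\xi$, and the hypothesis asserts $\PP(\integer) \subseteq \ker \lambda$. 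The goal is to show that the geodesic direction $v \in \mathfrak{t}$ of the ray to~$\xi$ lies in $\mathfrak{s} := \mathrm{Lie}\bigl(\SS(\real)^\circ\bigr)$; once established, $\{a^t\} \subseteq \SS(\real)^\circ$ and the ray $\{x a^t\}_{t \ge 0}$ sits inside the $\rational$-split sub-flat $x \SS(\real)^\circ$ of the maximal flat $xA$, putting $\xi$ on its boundary.

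First I would reduce to $\lambda|_T$: the unipotent group $U$ and the connected reductive group $M$ with compact center (which is generated by its semisimple derived group and its compact center, each admitting only trivial continuous homomorphisms to~$\real$) are both in $\ker \lambda$. By \fullCref{HoroSubgrp}{subgrp} and the positive-definiteness of the Killing form on $\mathfrak{t}$, the linear functional $\lambda|_\mathfrak{t}$ is the Killing-pairing with $v$, up to positive scalar.

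The key step applies Borel--Harish-Chandra to the $\rational$-anisotropic $\rational$-torus $\TT$: the group $\TT(\integer) \cap \TT(\real)^\circ$ is a cocompact lattice in the connected abelian Lie group $\TT(\real)^\circ$. Since $\TT \subseteq \PP$, this lattice lies in $\ker \lambda$, and a continuous homomorphism from a connected abelian Lie group to~$\real$ that kills a cocompact subgroup must be trivial. Hence $\lambda$ vanishes on $\mathfrak{t}_a := \mathrm{Lie}\bigl(\TT(\real)^\circ\bigr)$, and in particular on $\mathfrak{t} \cap \mathfrak{t}_a$. By \cref{SplitPerpNoChar} applied to the $\rational$-character-free group $\TT$ and the $\rational$-split torus $\SS$ that centralizes it, one has $\mathfrak{s} \perp (\mathfrak{t} \cap \mathfrak{t}_a)$; since the $\real$-split part of the center of the $\rational$-Levi decomposes orthogonally as $\mathfrak{t} = \mathfrak{s} \oplus (\mathfrak{t} \cap \mathfrak{t}_a)$, the vanishing of $\lambda$ on $\mathfrak{t} \cap \mathfrak{t}_a$ forces $v \in \mathfrak{s}$, as desired.

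The main obstacle is invoking the correct form of Borel--Harish-Chandra cocompactness for $\TT$, together with the structural comparison of the real and rational Langlands decompositions that yields the orthogonal splitting $\mathfrak{t} = \mathfrak{s} \oplus (\mathfrak{t} \cap \mathfrak{t}_a)$. Given those, identifying $v$ via \fullCref{HoroSubgrp}{subgrp} and concluding via \cref{SplitPerpNoChar} is routine.
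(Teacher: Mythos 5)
Your proof is correct and relies on the same two key ingredients as the paper's: cocompactness of the integer points of a $\rational$-anisotropic torus, and \cref{SplitPerpNoChar} for the orthogonal splitting into $\rational$-split and $\rational$-anisotropic pieces. The execution differs in one meaningful way. The paper does not work with the center of the $\rational$-Levi of~$\PP$ at all; instead, it invokes \cite[Cor.~3 of \S7.1]{PlatonovRapinchukBook} to produce a maximal $\rational$-torus $T \subseteq \PP$ that contains an entire maximal $\real$-split torus $A$, and decomposes $T = SE$. Since the geodesic $\gamma$ to~$\xi$ automatically lies in $xA$ (by \cref{AFixesXi}), the orthogonal splitting $\mathfrak{a} = \mathfrak{s} \oplus (\mathfrak{e}\cap\mathfrak{a})$ immediately yields $\gamma'(0) \in \mathfrak{s}$ from $\gamma \perp xE$. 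Your version instead decomposes the center $Z = \SS\cdot\TT$ of the $\rational$-Levi and therefore needs the additional fact that the direction $v$ to~$\xi$ lies in $\mathfrak{t}$ (the $\real$-split part of $\mathrm{Lie}(Z)$), not merely in~$\mathfrak{a}$. This is true --- the stabilizer of~$\xi$ is a parabolic $Q \supseteq \PP$, so every root of~$A$ occurring in the Levi of~$\PP$ vanishes on~$v$ --- but it is an extra step that you assert without proof, and that the paper's choice of torus sidesteps entirely. The two arguments are otherwise identical in substance: your signed-shift homomorphism $\lambda$ is the paper's~$\ell$, and ``$\lambda$ vanishes on $\mathfrak{t}_a$ because $\TT(\integer)$ is cocompact and $\real$ has no nontrivial compact subgroups'' is exactly the paper's ``$E_{\integer}$ fixes the horosphere, hence $E$ does.''
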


\begin{proof} 
Let $P = \PP(\real)$. There exists a $\rational$-torus~$T$ of~$P$, such that $T$ contains a maximal $\real$-split torus~$A$ \cite[Cor.~3 of \S7.1, p.~405]{PlatonovRapinchukBook}. Choose $x \in K \backslash G$, such that $x A$ is a (maximal) flat. Since $A \subset P$ fixes~$\xi$, \Cref{AFixesXi} provides a geodesic $\gamma = \{\gamma_t\}$ in~$xA$, such that $\lim_{t \to \infty} \gamma_t = \xi$ (and $\gamma_0 = x$). 

Write $T = S  E$, where $S$ is $\rational$-split and $E$ is $\rational$-anisotropic. Then $E_{\integer}$ is a cocompact lattice in~$E$ \cite[Thm.~4.11, p.~208]{PlatonovRapinchukBook} and, by assumption, $E_{\integer}$ fixes the horosphere through~$x$. This implies that all of~$E$ fixes this horosphere, so the flat $xE$ is contained in the horosphere, and is therefore perpendicular to the geodesic~$\gamma$. Since \cref{SplitPerpNoChar} tells us that the orthogonal complement of~$xE$ is $xS$, we conclude that $\gamma \subseteq xS$. So $\xi$ is on the boundary of the $\rational$-split flat $xS$.
\end{proof}

\begin{cor} \label{HoroLimIff}
Let $X/\Gamma$ be an arithmetic locally symmetric space of noncompact type with the Killing-form metric, and let $\xi$ be a point on the visual boundary of~$X$. Then the following are equivalent:
\noprelistbreak
	\begin{enumerate}
	\item \label{HoroLimIff-horo}
	$\xi$ is a horospherical limit point for\/~$\Gamma$.
	\item \label{HoroLimIff-notflat}
	$\xi$ is not on the boundary of any\/ $\rational$-split flat.
	\item \label{HoroLimIff-parab}
	There does not exist a parabolic $\rational$-subgroup\/~$\PP$ of\/~$\GG$, such that\/ $\PP(\real)$ fixes~$\xi$, and\/ $\PP(\integer)$ fixes some\/ \textup(or, equivalently, every\/\textup) horosphere based at~$\xi$.
	\item \label{HoroLimIff-SpheresDense}
	The horospheres based at~$\xi$ are uniformly coarsely dense in $X/\Gamma$.
	\item \label{HoroLimIff-BallsDense}
	The horoballs based at~$\xi$ are uniformly coarsely dense in $X/\Gamma$.
	\item \label{HoroLimIff-onto}
	$\pi(\mathcal{B}) = X/\Gamma$ for every horoball~$\mathcal{B}$ based at~$\xi$, where $\pi \colon X \to X/\Gamma$ is the natural covering map.
	\end{enumerate}
\end{cor}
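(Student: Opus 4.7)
The plan is to establish the cycle of implications $(1) \Rightarrow (2) \Rightarrow (3) \Rightarrow (4) \Rightarrow (5) \Rightarrow (1)$, together with $(6) \Leftrightarrow (1)$. The first three links of the cycle require no new work: $(1) \Rightarrow (2)$ is the contrapositive of \cref{OnFlatNotHoro}; $(2) \Rightarrow (3)$ is the contrapositive of \cref{OnQsplit}; and $(3) \Rightarrow (4)$ is the contrapositive of \cref{HoroToInfty}. In this sense, essentially all the substantive content of the \lcnamecref{HoroLimIff} has already been carried out in the preceding sections, and what remains for the corollary is just the plumbing.

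The remaining links are soft geometric observations. For $(4) \Rightarrow (5)$, each horosphere $\mathcal{H}_t$ is contained in the corresponding horoball $\mathcal{B}_t$ that it bounds, so uniform $C$-coarse density of the horospheres immediately gives the same for the horoballs with the same constant. The step $(5) \Rightarrow (1)$ is the only one requiring a small amount of care: given a basepoint $x \in X$ and a horoball $\mathcal{B}$ based at~$\xi$, I would choose a deeper horoball $\mathcal{B}' \subset \mathcal{B}$ whose bounding horosphere lies Busemann-height more than~$C$ inside~$\mathcal{B}$, so that the $C$-neighborhood of $\mathcal{B}'$ is contained in~$\mathcal{B}$ (this uses only that the Busemann function is $1$-Lipschitz). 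Applying~$(5)$ to~$\mathcal{B}'$ and to the basepoint $x$ produces $\gamma \in \Gamma$ and $z \in \mathcal{B}'$ with $d(x\gamma, z) < C$, whence $x\gamma \in \mathcal{B} \cap x\Gamma$, which is exactly~$(1)$.

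Finally, $(6) \Leftrightarrow (1)$ follows from the basepoint-independence of the horospherical-limit-point definition noted after \cref{CGammaHoroLimDefn}: condition~$(6)$ asserts that every horoball based at~$\xi$ meets \emph{every} $\Gamma$-orbit in~$X$, while $(1)$ only asserts that every such horoball meets the specific orbit $x \Gamma$, but these are equivalent precisely because the set of horospherical limit points does not depend on the basepoint~$x$. Since every step either cites an earlier result or is a one-line comparison of horoballs with nearby horospheres, I do not anticipate any real obstacle; the only spot where any computation is needed at all is the choice of the interior horoball $\mathcal{B}'$ in $(5) \Rightarrow (1)$.
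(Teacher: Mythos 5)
Your proposal is correct and follows the paper's proof essentially verbatim: the three substantive implications $(1)\Rightarrow(2)\Rightarrow(3)\Rightarrow(4)$ are obtained from \cref{OnFlatNotHoro}, \cref{OnQsplit}, and \cref{HoroToInfty} exactly as the paper does, $(4)\Rightarrow(5)$ is the same containment observation, and $(1)\Leftrightarrow(6)$ is the same unwinding of \cref{HoroLimDefn}. The only cosmetic difference is that for $(5)\Rightarrow(1)$ you re-derive the shrinking-horoball argument inline, whereas the paper simply cites \cref{CGammaHoroLimDefn}$(\Leftarrow)$ (whose proof is word-for-word the argument you wrote out), so the content is identical.
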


\begin{proof}
(\ref{HoroLimIff-horo}~$\Rightarrow$~\ref{HoroLimIff-notflat}) is the contrapositive of \cref{OnFlatNotHoro}. 
(\ref{HoroLimIff-notflat}~$\Rightarrow$~\ref{HoroLimIff-parab}) is the contrapositive of \cref{OnQsplit}.
(\ref{HoroLimIff-parab}~$\Rightarrow$~\ref{HoroLimIff-SpheresDense}) is the contrapositive of \cref{HoroToInfty}.
(\ref{HoroLimIff-SpheresDense}~$\Rightarrow$~\ref{HoroLimIff-BallsDense}) is obvious,  because horoballs are bigger than horospheres.
(\ref{HoroLimIff-BallsDense}~$\Rightarrow$~\ref{HoroLimIff-horo}) is \cref{CGammaHoroLimDefn}($\Leftarrow$).
(\ref{HoroLimIff-horo}~$\Leftrightarrow$~\ref{HoroLimIff-onto}) is a restatement of \cref{HoroLimDefn}.
\end{proof}

\begin{rem} 
Suppose $\GG$ is $\rational$-split (or, more generally, suppose $\rank{\rational} \GG = \rank{\real} \GG$).
Under this assumption, it is easy to show that if $\xi$ is not on the boundary of a $\rational$-split flat, then every horosphere based at~$\xi$ is dense in $K \backslash G / \Gamma$, not just coarsely dense. To see this, we prove the contrapositive. Note that the proof of \fullcref{HoroToInfty}{FixXi} only assumes there is a horosphere that is not dense. Now, if we let $S$ be any maximal $\rational$-split torus of~$P$, then $S$ is also a maximal $\real$-split torus (by our assumption that $\rank{\rational} \GG = \rank{\real} \GG$), so \cref{AFixesXi} tells us that $\xi$ is on the boundary of the corresponding ($\rational$-split) maximal flat $xS$ (since $S \subset P$ fixes~$\xi$).
\end{rem}

\section{Non-arithmetic locally symmetric spaces} \label{NonarithSect}

To state a generalization of \cref{HoroLimIffgRat} that does not require $X/\Gamma$ to be arithmetic, we need an appropriate generalization of the notion of a $\rational$-split flat.

\begin{defn}
Let $X / \Gamma = K \backslash G / \Gamma$ be a finite-volume locally symmetric space of noncompact type. 
\noprelistbreak
	\begin{itemize}
	\item A parabolic subgroup~$P$ of~$G$ is \emph{$\Gamma$-rational} if $\Gamma$ contains a lattice subgroup of~$\unip P$.
	
	\item Assume $\rank{\real} G = 1$. A torus~$S$ in~$G$ is \emph{$\Gamma$-split} if $S$ is $\real$-split and $S$ is contained in the intersection of two different $\Gamma$-rational, proper, parabolic subgroups of~$G$.
	
	\item From the Margulis Arithmeticity Theorem \cite[Thm.~1, p.~2]{MargulisBook}, we know that, after passing to a finite cover of~$X/\Gamma$ (in other words, after passing to a finite-index subgroup of~$\Gamma$), we can write 
		$$X/\Gamma =  (X_a/\Gamma_a) \times (X_c / \Gamma_c) \times (X_1/\Gamma_1) \times \cdots \times (X_n / \Gamma_n) ,$$
	where $X_a/\Gamma_a$ is arithmetic, $X_c / \Gamma_c$ is compact with all factors of real rank one, and each $X_k/\Gamma_k$ is noncompact with real rank one. 
	A torus in~$G$ is \emph{$\Gamma$-split} if it is contained in some torus of the form $S_a \times \{e\} \times S_1 \times \cdots \times S_n$, where $S_a$ is $\rational$-split, and each $S_k$ is $\Gamma_k$-split.
	
	\item A flat $x T$ in~$X$ is \emph{$\Gamma$-split} if the torus $T$ is $\Gamma$-split.
	\end{itemize}
\end{defn}

\begin{rem}
Suppose $G$ is defined over~$\rational$. It can be shown that:
\noprelistbreak
	\begin{enumerate}
	\item a parabolic subgroup of~$G$ is $G_{\integer}$-rational if and only if it is defined over~$\rational$,
	and
	\item a torus in~$G$ is $G_{\integer}$-split if and if it is $\rational$-split.
	\end{enumerate}
\end{rem}

A slight modification of the above arguments establishes the following generalization of \cref{HoroLimIff}. 

\begin{prop} \label{HoroLimIffNonArith}
Let $X/\Gamma$ be a finite-volume locally symmetric space of noncompact type with the Killing-form metric, and let $\xi$ be a point on the visual boundary of~$X$. Then the following are equivalent:
\noprelistbreak
	\begin{enumerate}
	\item $\xi$ is a horospherical limit point for\/~$\Gamma$.
	\item $\xi$ is not on the boundary of any\/ $\Gamma$-split flat.
	\item There does not exist a $\Gamma$-rational parabolic subgroup~$P$ of~$G$, such that $P$ fixes~$\xi$, and\/ $P \cap \Gamma$ fixes some\/ \textup(or, equivalently, every\/\textup) horosphere based at~$\xi$.
	\item The horospheres based at~$\xi$ are uniformly coarsely dense in $X/\Gamma$.
	\item The horoballs based at~$\xi$ are uniformly coarsely dense in $X/\Gamma$.
	\item $\pi(\mathcal{B}) = X/\Gamma$ for every horoball~$\mathcal{B}$ based at~$\xi$, where $\pi \colon X \to X/\Gamma$ is the natural covering map.
	\end{enumerate}
\end{prop}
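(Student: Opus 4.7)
The plan is to reduce to the product decomposition provided by the Margulis Arithmeticity Theorem and then combine, factor by factor, the arithmetic result of \cref{HoroLimIff} with the classical rank-one theory. Since the set of horospherical limit points is invariant under commensuration of~$\Gamma$ (\cref{InvtUnderCommGamma}), and $\Gamma$-split flats together with $\Gamma$-rational parabolic subgroups are unchanged when $\Gamma$ is replaced by a finite-index subgroup, we may assume at the outset that
$$X/\Gamma = (X_a/\Gamma_a) \times (X_c/\Gamma_c) \times (X_1/\Gamma_1) \times \cdots \times (X_n/\Gamma_n)$$
exactly as in the decomposition used to define $\Gamma$-split tori.

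A boundary point~$\xi$ of the product $X$ lies in the spherical join of the factor boundaries, so it corresponds to a unit direction whose projection to each factor has some (possibly zero) magnitude, and the Busemann function at~$\xi$ is the weighted sum of the Busemann functions for each nontrivial projection. Consequently, a horoball based at~$\xi$ is cylindrical along every factor where~$\xi$ has no component, and restricts to a genuine horoball in each factor where the projection of~$\xi$ is nontrivial. We then argue factor by factor: the compact factor $X_c/\Gamma_c$ has bounded diameter and does not affect coarse density; the arithmetic factor is governed by \cref{HoroLimIff}; and for each rank-one noncompact factor~$X_k/\Gamma_k$, classical rank-one theory tells us that $\xi_k \in \bdry X_k$ is a horospherical limit point for~$\Gamma_k$ if and only if $\xi_k$ is not a parabolic fixed point of~$\Gamma_k$, which is in turn equivalent to $\xi_k$ not being an endpoint of a geodesic whose other end is also a $\Gamma_k$-rational parabolic fixed point.

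The implication (1)~$\Rightarrow$~(2) then follows by adapting \cref{OnFlatNotHoro} factor by factor: if $\xi$ lies on the boundary of a $\Gamma$-split flat $x(S_a \times \{e\} \times S_1 \times \cdots \times S_n)$, then its projection to~$X_a$ is on a $\rational$-split flat, its projection to each~$X_k$ is a $\Gamma_k$-rational parabolic fixed point, and there is no component in~$X_c$. Running the root-contraction argument of \cref{OnFlatNotHoro} in the arithmetic factor and the analogous cusp-geometry argument in each rank-one factor produces lattice elements $\gamma \in \Gamma$ whose conjugates $g\gamma g^{-1}$ approach the identity uniformly on horospheres based at~$\xi$, so \fullcref{HoroSubgrp}{limit} yields that $\xi$ is not horospherical. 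The implications (2)~$\Rightarrow$~(3)~$\Rightarrow$~(4) are modeled on \cref{OnQsplit} and \cref{HoroToInfty}; here the analogue of \cref{ClosureIsPorbit} needed for \cref{HoroToInfty} is furnished by Ratner's theorem, which, since $\Gamma$ is now a product of an arithmetic lattice and rank-one lattices, still forces $\closure{N\Gamma} = M^\ast U \Gamma$ for the unipotent radical of some $\Gamma$-rational parabolic. The remaining steps (4)~$\Rightarrow$~(5)~$\Rightarrow$~(1) and (1)~$\Leftrightarrow$~(6) carry over verbatim from the arithmetic proof, since they use no arithmeticity.

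The hardest part will be the non-arithmetic version of \cref{ClosureIsPorbit}: verifying, when Ratner gives $\closure{N\Gamma}=H\Gamma$, that $H$ necessarily has the form $M^\ast U$ for a $\Gamma$-rational parabolic $P = MTU$. In each rank-one factor this reduces to the observation that the only closed connected subgroups of the rank-one factor containing a maximal unipotent are the whole group and the associated minimal parabolic, combined with the characterization of $\Gamma_k$-rationality as the unipotent radical meeting~$\Gamma_k$ in a lattice; the arithmetic factor is already handled by \cref{ClosureIsPorbit}, and the compact factor contributes only a compact piece that can be absorbed into~$M^\ast$. A secondary wrinkle is tracking how ``being on the boundary of a $\Gamma$-split flat'' and ``being fixed by a $\Gamma$-rational parabolic'' interact across factors when~$\xi$ has nontrivial projections to several factors simultaneously, but both conditions decompose factor-wise with respect to the projections of~$\xi$, so once the per-factor statements are in hand the gluing is routine.
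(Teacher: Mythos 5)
The paper itself offers no explicit proof of \cref{HoroLimIffNonArith}---it merely asserts that ``a slight modification of the above arguments'' suffices---so your proposal has to be judged on its own merits rather than against a model.

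Your overall strategy (pass to a finite-index subgroup and use the Margulis product decomposition, handle the arithmetic factor with \cref{HoroLimIff}, the compact factor trivially, and the rank-one factors with cusp geometry, with Ratner/Dani supplying the non-arithmetic analogue of \cref{ClosureIsPorbit}) is sound and is likely close to what the authors intended. But the ``factor-by-factor'' framing has a genuine flaw that you need to confront. When $\xi$ projects nontrivially onto several factors, the horosphere $x a^t A_\perp N$ of \fullcref{HoroSubgrp}{subgrp} does \emph{not} split as a product of horospheres in the factors: $A_\perp$, the orthogonal complement of $\{a^t\}$ in the product torus $A = \prod A_i$, projects onto \emph{all} of $A_i$ in each factor, not onto $(A_i)_\perp$. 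A point $g = a^t b n$ in the horosphere can therefore have $b_i$ pointing ``away from'' $a_i^t$ in one factor, compensated by another factor. This means you cannot literally run the contraction argument of \cref{OnFlatNotHoro} inside each factor separately and expect it to succeed there; the factor in which the needed root contraction happens depends on $b$. The cure is to apply \cref{InSpanMustLarge} once, globally in $A$, taking the $\alpha^A$ from all noncompact factors on which $\xi$ projects nontrivially (these are mutually orthogonal across factors and satisfy the sign hypotheses within each factor), so the lemma supplies, for each $b\in A_\perp$, \emph{some} root in \emph{some} factor that is expanded---and it is precisely in that factor that one finds a short conjugate of a lattice element of the unipotent radical. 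Your write-up hides this by describing the step as running the root-contraction and the cusp-geometry arguments ``in'' each factor, which, taken at face value, would fail.

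A related imprecision: the horospherical-limit-point condition (1) does \emph{not} decompose as ``$\xi_i$ is horospherical for $\Gamma_i$ in each factor.'' It decomposes with a disjunction: $\xi$ is horospherical for $\Gamma$ iff $\xi_i$ is horospherical for $\Gamma_i$ for \emph{at least one} of the factors on which $\xi$ projects nontrivially (and automatically so if $\xi$ has a component in the compact factor). This is clear from condition (2)---$\xi$ lies on a $\Gamma$-split flat boundary only when \emph{every} nontrivial projection $\xi_i$ lies on a $\Gamma_i$-split flat boundary---but your phrase ``we then argue factor by factor'' reads like a conjunction and would lead to the wrong equivalence for (1). Once you correct this point, and treat the contraction estimate globally rather than factor by factor, the rest of your plan goes through: the product decomposition of $\closure{N\Gamma}$ is indeed routine (Dani's rank-one dichotomy gives $\closure{N_k\Gamma_k}$ equal to $G_k$ or a compact $N_k$-orbit, minimality gives $G_c$ on the compact factor, and \cref{ClosureIsPorbit} covers the arithmetic factor), and $(4)\Rightarrow(5)\Rightarrow(1)$ and $(1)\Leftrightarrow(6)$ indeed carry over verbatim since they use no arithmeticity.
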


\begin{rem} \label{NotKilling}
The above results apply only to the Killing-form metric on $K \backslash G$, but it is well known that any other symmetric metric~$g$ differs only by a scalar multiple on each irreducible factor of $K \backslash G$ \cite[p.~378]{Helgason-DiffGeom}.
If the endpoint of a particular geodesic ray $\{x a^t\}_{t=0}^\infty$ is a horospherical limit point in the Killing-form metric, and we let
	$$ b^t = (a_1^{t/\lambda_1}, \ldots, a_n^{t/\lambda_n}) , $$
where $\lambda_i$ is the scaling factor of~$g$ on the irreducible factor $K_i \backslash G_i$, then the endpoint of $\{x b^t\}_{t=0}^\infty$ is a horospherical limit point with respect to the metric~$g$. In fact, it is easy to see that the two different geodesic rays (in the two different metrics) have exactly the same horospheres in $K \backslash G$.

This means that the above proofs apply in general if we replace the phrase ``$\rational$-split'' with ``$\rational$-good,'' where a torus~$S$ is \emph{$\rational$-good} if $S$ is contained in a maximal $\rational$-torus~$T$ of~$G$, such that $T$ contains a maximal $\rational$-split torus of~$G$, and $S$~is orthogonal to the maximal $\rational$-anisotropic torus of~$T$ (cf.\ \cref{SplitPerpNoChar}). 
\end{rem}


\begin{thebibliography}{[99]}

\larger[2] 
\itemsep=\smallskipamount 

\bibitem{BieriGeoghegan}
{\scshape Bieri, R.; Geoghegan, R.}
Limit sets for modules over groups on $\mathrm{CAT}(0)$ spaces --- from the Euclidean to the hyperbolic.
Preprint.
\hfil\penalty100\hfilneg 
\url{http://arxiv.org/abs/1306.3403}

\bibitem{Borel-LinAlgGrps}
{\scshape Borel, A.}
Linear algebraic groups.
{\em Springer, New York,} 1991.
\MR{1102012} (92d:20001),
\Zbl{0726.20030}.



\bibitem{Dani-MinHoroFlows}
{\scshape Dani, S.~G.}
Invariant measures and minimal sets of horospherical flows.
{\em Invent. Math.} {\bf 64} (1981) 357--385. 
\MR{0629475} (83c:22009),
\Zbl{0498.58013}

\bibitem{Dani-Divergent}
{\scshape Dani, S.~G.}
Divergent trajectories of flows on homogeneous spaces and Diophantine approximation,
{\em J. Reine Angew. Math.} {\bf 359} (1985) 55--89. 
\MR{0794799} (87g:58110a),
\Zbl{0578.22012}

\bibitem{Dani-OrbitHoro}
{\scshape Dani, S.~G.}
Orbits of horospherical flows,
{\em Duke Math. J.} {\bf 53} (1986) 177--188. 
\MR{0835804} (87i:22026),
\Zbl{0609.58038}


\bibitem{Hattori-GeomLimSets}
{\scshape Hattori, T.}
Geometric limit sets of higher rank lattices.
{\em Proc. London Math. Soc.} (3) {\bf 90} (2005) 689--710. 
\MR{2137827} (2006e:22013),
\Zbl{1077.22015}

\bibitem{Helgason-LieGrpSymSp}
{\scshape Helgason, S.}
Lie groups and symmetric spaces.
{\em Battelle Rencontres: 1967 Lectures in Mathematics and Physics,} 1--71.
{\em Benjamin, New York,} 1968.
\MR{0236325} (38~\#4622),
\Zbl{0177.50601}

\bibitem{Helgason-DiffGeom}
{\scshape Helgason, S.}
Differential geometry, Lie groups, and symmetric spaces.
{\em Academic Press, New York,} 1978.
 \MR{0514561} (80k:53081),
 \Zbl{0451.53038}

\bibitem{Leuzinger-Tits}
{\scshape Leuzinger, E.}
Tits geometry, arithmetic groups, and the proof of a conjecture of Siegel. 
{\em J. Lie Theory} {\bf 14} (2004) 317--338.
\MR{2066859},
\Zbl{1086.53073}

\bibitem{MargulisBook}
{\scshape Margulis, G.~A.}
Discrete subgroups of semisimple Lie groups.
{\em Springer, Berlin,} 1991. 
\MR{1090825} (92h:22021),
\Zbl{0732.22008}



\bibitem{MorrisWortman}
{\scshape Morris, D.~W.; Wortman, K.}
Horospherical limit points of $S$-arithmetic groups.
Preprint. 
\hfil\penalty100\hfilneg 
\url{http://arxiv.org/abs/1309.7113}



\bibitem{PlatonovRapinchukBook}
{\scshape Platonov, V.; Rapinchuk, A.}
Algebraic groups and number theory.
{\em Academic Press, New York,} 1994.
\MR{1278263} (95b:11039),
\Zbl{0841.20046}

\bibitem{RaghunathanBook}
{\scshape Raghunathan, M.~S.}
Discrete subgroups of Lie groups.
{\em Springer, New York,} 1972.
\MR{507234} (58~\#22394a),
\Zbl{0254.22005}

\bibitem{Ratner-Equidist}
{\scshape Ratner, M.}
Raghunathan's topological conjecture and distributions of unipotent flows.
{\em Duke Math. J.} {\bf 63} (1991) 235--280.
\MR{1106945} (93f:22012),
\Zbl{0733.22007}


\bibitem{RehnThesis}
{\scshape Rehn, W.~H.}
Kontrollierter Zusammenhang \"uber symmetrischen R\"aumen.
Dissertation, Johann Wolfgang Goethe-Universit\"at, Frankfurt, Germany, 2007.
\hfil\penalty100\hfilneg 
{\smaller 
\url{http://publikationen.ub.uni-frankfurt.de/files/360/diss.pdf}
}


\bibitem{TomanovWeiss-ToriAct}
{\scshape Tomanov, G.; Weiss, B.}
Closed orbits for actions of maximal tori on homogeneous spaces.
{\em Duke Math.~J.} {\bf 119} (2003) 367--392. 
\MR{1997950} (2004g:22006),
\Zbl{1040.22005}

\bibitem{WarnerBook}
{\scshape Warner, G.}
Harmonic analysis on semi-simple Lie groups~I.
{\em Springer, New York,} 1972.
\MR{0498999} (58~\#16979),
\Zbl{0265.22020}

\end{thebibliography}
\end{document}